\documentclass[11pt]{amsart}

\usepackage{amsmath,amssymb,amsthm,mathtools}
\usepackage{geometry}
\usepackage{microtype}
\usepackage[hidelinks]{hyperref}
\hypersetup{
  pdftitle={Ergodic treeable equivalence relations are richly ergodic},
  pdfauthor={Wei Dai}
}

\theoremstyle{plain}
\newtheorem{theorem}{Theorem}[section]
\newtheorem{corollary}[theorem]{Corollary}
\newtheorem{lemma}[theorem]{Lemma}
\newtheorem{claim}[theorem]{Claim}
\newtheorem{proposition}[theorem]{Proposition}

\theoremstyle{definition}
\newtheorem{definition}[theorem]{Definition}

\newtheorem{question}[theorem]{Question}

\newcommand{\N}{\mathbb{N}}
\newcommand{\Z}{\mathbb{Z}}
\newcommand{\F}{\mathbb{F}}
\newcommand{\cS}{\mathcal{S}}

\title{Ergodic treeable equivalence relations are richly ergodic}
\author{Wei Dai}
\address{School of Mathematical Sciences and LPMC, Nankai University,
Tianjin 300071, P.R. China}
\email{weidai@nankai.edu.cn}
\date{}

\begin{document}

\begin{abstract}
We study the density of ergodic subrelations in the canonical Polish space
of subequivalence relations of a probability-measure-preserving countable
Borel equivalence relation. Our main result establishes rich ergodicity
for ergodic treeable equivalence relations. The proof combines a strongly increasing
exhaustion with relative ergodic hyperfinite decomposition. As an
application, for every $2\leq r<\infty$, we show that the product of the
dyadic odometer action of $\Z$ with the Bernoulli action of $\F_r$ has a
richly ergodic orbit relation, whereas the Bernoulli action of
$\Z\times\F_r$ does not. Thus rich ergodicity of a free ergodic action is
not determined by the acting group.
\end{abstract}

\maketitle

\section{Introduction and main results}

Throughout, unless otherwise stated, ambient probability spaces are
standard and nonatomic, and all countable Borel equivalence relations
are probability-measure-preserving (p.m.p.). All relations,
transformations, inclusions, and equalities are understood modulo null
sets unless a pointwise assertion is explicitly made. We write
$\N=\{0,1,2,\ldots\}$.

For a p.m.p. countable Borel equivalence relation $E$ on $(X,\mu)$,
Kechris equips the space $\cS(E)$ of its p.m.p. subequivalence relations
with a canonical Polish topology \cite{Kechris}. An ergodic relation $E$
is \emph{richly ergodic} if its ergodic subrelations are dense in
$\cS(E)$. Since the ergodic subrelations form a $G_\delta$ set
\cite[Theorem~10.1]{Kechris}, this is equivalent to saying that the
generic subrelation of $E$ is ergodic. Kechris asks which ergodic
relations have this property \cite[Problem~11.1]{Kechris}; see also
\cite[Section~8]{KechrisSurvey} for an account of the initial results.

Several positive results suggest that treeability should be sufficient
for rich ergodicity. Every ergodic hyperfinite relation is richly
ergodic \cite[Proposition~11.7]{Kechris}. Kechris also proves rich
ergodicity for free mixing actions of a countably infinite free product
$\Gamma_1*\Gamma_2*\cdots$ of nontrivial countable groups
\cite[Proposition~8.6]{KechrisSurvey}. Taking every factor to be $\Z$
gives, in particular, the orbit relations of free mixing actions of
$\F_\infty$. Thus rich ergodicity is known both for hyperfinite relations
and for this class of non-hyperfinite treeable relations. Our first
theorem shows that treeability alone suffices.

\begin{theorem}\label{thm:treeable-rich}
Every ergodic treeable p.m.p. countable Borel equivalence relation is
richly ergodic.
\end{theorem}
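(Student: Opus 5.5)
To prove Theorem~\ref{thm:treeable-rich}, let $E$ be ergodic and treeable on $(X,\mu)$. We must show: for every $F\in\cS(E)$ and every basic neighborhood of $F$, there is an ergodic subrelation in that neighborhood. Recall that in Kechris's topology a basic neighborhood is determined by finitely many Borel partial injections $\varphi_1,\dots,\varphi_k$ with graphs in $E$ and an $\varepsilon>0$. A subrelation $F'$ lies in it when $\mu(\{x:(x,\varphi_i x)\in F\}\,\triangle\,\{x:(x,\varphi_i x)\in F'\})<\varepsilon$ for each $i$. So the task is to find an ergodic $F'$ that agrees with $F$ on the graphs of $\varphi_1,\dots,\varphi_k$ outside a set of measure $<\varepsilon$.

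\textbf{Step 1: reduce $F$ to a finite-type, hyperfinite piece.} Fix a treeing $T$ of $E$, i.e.\ a Borel acyclic graph generating $E$. The union of the graphs of the $\varphi_i$ is a finite set of relations. For each $i$, the path in $T$ joining $x$ to $\varphi_i(x)$ has finite length, so we can choose $N$ such that this path has length at most $N$ for all $x$ outside a set of measure $<\varepsilon/2$. Let $T_F$ be the subforest of $T$ obtained by keeping only the edges whose endpoints are $F$-related. Its connected components generate a subrelation $F_0\subseteq F$. This step has two possible outcomes.
\begin{itemize}
\item[(a)] The treeing is rich enough that the relevant pieces of $F$ are already visible in $T$. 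This is where the \emph{strongly increasing exhaustion} mentioned in the abstract enters. We choose an increasing sequence of finite subrelations $E_n\uparrow$ of a hyperfinite subrelation $R$ of $E$, together with a tree structure, such that each $E_n$-class is a connected subtree of $T$. This has to be arranged so that for most $x$ the $T$-path from $x$ to $\varphi_i(x)$ lies in a single $E_n$-class.
\item[(b)] Alternatively, we replace $F$ by the finite subrelation $F_n=F\cap E_n$. For large $n$ this is still $\varepsilon/2$-close to $F$ on the given $\varphi_i$, because $(x,\varphi_i x)\in E_n$ for most $x$.
\end{itemize}
In either case we arrive at the same reduction: it suffices to approximate a \emph{finite} subrelation $F_n\subseteq E_n$ by ergodic subrelations of $E$.

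\textbf{Step 2: relative ergodic hyperfinite decomposition.} We now seek an ergodic $F'$ with $F'\supseteq F_n$ that does not add any of the pairs $(x,\varphi_i x)$, except on a small set. The natural move is to build a hyperfinite ergodic $S\subseteq E$ (available by ergodicity of $E$, via Dye/Kechris-type results) and then glue $F_n$-classes together only along $T$-edges that leave $E_n$-classes. Because $T$ is a tree, joining classes along edges outside the finite subtrees creates no new pairs $(x,y)$ whose $T$-path lies inside an $E_n$-class. Hence $F'\cap E_n=F_n$ exactly, and the pairs $(x,\varphi_i x)$ with paths inside $E_n$-classes are unchanged. The \emph{relative} decomposition needed here is the following. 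Collapse the $E_n$-classes; the quotient forest $T/E_n$ is again a treeing of an ergodic relation on a transversal. Inside it, find a hyperfinite ergodic sub-treeing, i.e.\ a single-ended or two-ended subforest that generates an ergodic relation. Each edge of this subforest is then lifted to one $T$-edge between an $F_n$-class in one $E_n$-class and an $F_n$-class in the adjacent $E_n$-class. To make the result ergodic, rather than merely ergodic on a quotient, we need every $F_n$-class to get connected. So inside each $E_n$-class we use edges of the quotient tree incident to each $F_n$-subclass, taking different $F_n$-subclasses to different neighbors. Acyclicity then guarantees that no two $F_n$-classes inside the same $E_n$-class become merged.

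\textbf{Main obstacle.} The delicate point is the last one: achieving ergodicity of $F'$ while forbidding any merger of distinct $F_n$-classes within one $E_n$-class. We have to route each $F_n$-subclass out of its $E_n$-class along distinct tree edges, and then show that the resulting relation on the quotient is ergodic. My first attempt would be a Borel-matching argument: $E_n$-classes with many $F_n$-subclasses have correspondingly many boundary edges in $T$, given an infinite-degree or sufficiently branching treeing, which can be arranged by refining $T$. After this refinement, the routing becomes a bounded-degree matching problem. Ergodicity would then follow by iterating over $n$ with a diagonal construction, whose limit is an ergodic hyperfinite-over-$F_n$ relation that stays $\varepsilon$-close to $F$.
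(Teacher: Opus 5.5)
There is a genuine gap, and it already occurs in Step~1. You require finite subrelations $E_n$ of a single hyperfinite $R\subseteq E$ with $(x,\varphi_i x)\in E_n$ for most $x$, and then assert that $F\cap E_n$ is close to $F$. When $E$ is not hyperfinite, which is the only case not already settled by Proposition~\ref{prop:hyperfinite-rich}, this cannot be arranged. For example, let $E$ be the orbit relation of a free ergodic p.m.p.\ action of $\F_2=\langle a,b\rangle$, take $\varphi_1=a$, $\varphi_2=b$, and set $A=\{x:(x,ax)\in R\}$ and $B=\{x:(x,bx)\in R\}$. The edges $(x,ax)$ for $x\in A$ and $(x,bx)$ for $x\in B$ form a subforest of the Cayley treeing, so they tree a subrelation of $R$ of cost $\mu(A)+\mu(B)$. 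Subrelations of hyperfinite relations are hyperfinite and have cost at most $1$. Hence for some $i$ the pair $(x,\varphi_i x)$ lies outside $R$, and so outside every $E_n$, on a set of measure at least $1/2$. Thus for $F=E$, or any $F$ containing these pairs, no $F\cap E_n$ lies in the $\varepsilon$-neighborhood once $\varepsilon<1/2$. The reduction cannot be repaired by choosing the $E_n$ more cleverly.

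What survives is your Step~2 observation in its general form: if $E_n\perp E_S$ and $F\subseteq E_n$, then $F\vee E_S=F*E_S$ and $(F*E_S)\cap E_n=F$. This is the content of Kechris's Proposition~\ref{prop:strongly-increasing}, and it needs no finiteness. The right reduction is therefore to an exhaustion $E=\bigcup_nE_n$ by subrelations that are typically non-hyperfinite, each perpendicular to $E_S$ for some ergodic $S\in[E]$. The whole difficulty is manufacturing that ergodic $S$, and you leave this as the ``main obstacle'' with only a matching/diagonal heuristic.

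Gluing along the leftover edges of your treeing cannot supply $S$ in general. Suppose a positive-measure set of vertices has all its treeing edges inside $E_n$-classes and $F_n$ is trivial there. Then those vertices remain singleton classes of any such join, so the join is not ergodic.

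The paper's $S$ is not built from leftover edges.
\begin{enumerate}
\item It slides the treeing to $\mathcal T\sqcup H$ with $T$ ergodic and sets $E_n=E_{\mathcal T\sqcup H_n}$, while reserving disjoint positive-cost edges $R_n$.
\item It compresses $E_{\mathcal T\sqcup R_n}$ to a small set $Y$, pushing the normalized cost above $2$.
\item The relative Miller--Tserunyan decomposition with fixed base $E_{T_Y}$ gives an ergodic $S_Y$ with $E_{T_Y}\perp E_{S_Y}$.
\item Conjugating by powers of $S_Y$ yields an infinite-cost treeing $\mathcal T\sqcup H_Y$ whose relation is perpendicular to $E_{H_n}$.
\item A second application of the relative decomposition produces an ergodic $S$ with $E_T\perp E_S$, and hence $E_n\perp E_S$.
\end{enumerate}
The missing ingredients in your proposal are cost, compression, and the fixed-base decomposition.
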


In particular, every free ergodic p.m.p. action of $\F_r$, for
$2\leq r<\infty$, has a richly ergodic orbit relation, regardless of
whether the action is mixing. More generally, the conclusion requires
only treeability of the equivalence relation, not a prescribed presentation by a
group action.

After treating the hyperfinite case separately, the proof of
Theorem~\ref{thm:treeable-rich} constructs an exhaustion of a treeing while reserving a positive-measure collection of unused edges
at each stage. Compression and conjugation allow these edges to supply
an ergodic automorphism perpendicular to that stage. The relative
Miller--Tserunyan decomposition theorem \cite[Theorem~6.13]{MT} keeps a
prescribed ergodic base fixed. This produces the strongly increasing
exhaustion required by Kechris's criterion, including when the ambient
relation has finite cost.

There is also a source of obstructions to rich ergodicity. Following
Gaboriau--Tucker-Drob \cite{GTD}, a relation $E$ is
\emph{non-approximable} if every increasing exhaustion
$E=\bigcup_n E_n$ has a stage satisfying
$E_n\upharpoonright B=E\upharpoonright B$ on some positive-measure
Borel set $B$. Kechris proves that an ergodic non-approximable relation
cannot be richly ergodic \cite[Proposition~11.3]{Kechris}. Thus the
non-approximability results give a systematic way to obtain negative
examples.

For direct products, Gaboriau--Tucker-Drob show that a free p.m.p.
action of $H\times K$, with $H$ and $K$ infinite and finitely generated,
is non-approximable when the $H$-restriction is strongly ergodic and the
$K$-restriction is ergodic \cite[Theorem~1.1]{GTD}. In particular, the
Bernoulli action of $\Z\times\F_r$, $2\leq r<\infty$, is not richly
ergodic: its $\F_r$-restriction is strongly ergodic and its
$\Z$-restriction is ergodic. This raises the question whether the same
group can also admit a free ergodic action with a richly ergodic orbit
relation. Our second theorem gives an affirmative answer.

\begin{theorem}\label{thm:rich_and_non_rich}
Let $2\leq r<\infty$. The group $\Z\times\F_r$ admits
\begin{enumerate}
\renewcommand{\labelenumi}{(\roman{enumi})}
\item a free ergodic p.m.p. action whose orbit equivalence relation is
richly ergodic; and
\item a free ergodic p.m.p. action whose orbit equivalence relation is
not richly ergodic.
\end{enumerate}
\end{theorem}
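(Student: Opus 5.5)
Part (ii) is already contained in the discussion preceding the statement, so I would dispose of it first. Take the Bernoulli action $\Z\times\F_r\curvearrowright([0,1]^{\Z\times\F_r},\lambda^{\otimes\Z\times\F_r})$. It is free, mixing and hence ergodic. Its restriction to $\F_r$ is again a Bernoulli action of a nonamenable group, hence strongly ergodic. Its restriction to $\Z$ is a Bernoulli shift, hence ergodic. By \cite[Theorem~1.1]{GTD}, applied with $H=\F_r$ and $K=\Z$ (both infinite and finitely generated), the orbit relation is non-approximable. By \cite[Proposition~11.3]{Kechris} it is therefore not richly ergodic.

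For (i) I would take the diagonal product action of the dyadic odometer $\Z\curvearrowright(Y,\nu)=(\{0,1\}^{\N},\text{uniform})$ and the Bernoulli action $\F_r\curvearrowright(Z,\eta)$, where $(g,h)\cdot(y,z)=(g\cdot y,h\cdot z)$ on $X=Y\times Z$.

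\emph{Freeness and ergodicity.} The odometer is free and the Bernoulli action of $\F_r$ is essentially free. If $(g,h)$ fixes $(y,z)$, then $g$ fixes $y$ and $h$ fixes $z$, which forces $g=0$ and $h=e$ almost surely. For ergodicity, the product of an ergodic action with a weakly mixing action is ergodic. The Bernoulli action of $\F_r$ is mixing, so $L^2_0(Z)$ contains no finite-dimensional invariant subspaces, and the spectral argument goes through for the $\Z\times\F_r$ action.

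\emph{Rich ergodicity.} The plan is to show the orbit relation $R$ is treeable and then invoke Theorem~\ref{thm:treeable-rich}. Let $E_0$ be the odometer orbit relation on $Y$. It is hyperfinite: $E_0=\bigcup_n F_n$, where $F_n$ identifies sequences that agree past coordinate $n$, and the $F_n$-classes are finite of size $2^n$. Then $R=\bigcup_n R_n$, where $R_n$ is the orbit relation of the action of the subgroup $2^n\Z\times\F_r$ restricted to the $F_n$-pieces. More concretely, $(y,z)\,R_n\,(y',z')$ if and only if $y\,F_n\,y'$ and $z'\in\F_r\cdot z$, and the fibres are then glued via the action of $\Z$ through the finite cyclic quotient. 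A cleaner route is to observe that $R$ is the orbit relation of $\Z\times\F_r$ acting on $Y\times Z$, where $Y=\varprojlim \Z/2^n$ is the profinite completion model. By a standard Mackey-range / induced-action argument, $R$ is then orbit equivalent to the orbit relation of $\F_r$ acting on $Z\times Y$ by the skew product with trivial cocycle, amplified by the profinite structure. I do not trust this and would instead argue as follows.

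The orbit relation of the odometer factor, pulled back to $X$, is an ascending union of finite relations. I would build a treeing of $R$ directly. For each $n$, the subrelation $R_n$ generated by $F_n\times\mathrm{id}$ and the $\F_r$-action is a finite-index extension of the free $\F_r$ orbit relation. Because the odometer action of $\Z$ commutes with $\F_r$ and acts on the $Y$-coordinate through a finite quotient at level $n$, $R_n$ is the orbit relation of the free action of $\F_r\times\Z/2^n$ on $X/\!\sim$. Here $R_n$ is the relation whose classes are unions of $2^n$ copies of $\F_r$-orbits. The key point is that $R_n$ is treeable: a graphing is obtained by taking a Borel transversal (the set $\{y:y_0=\dots=y_{n-1}=0\}\times Z$) on which the restriction is a free $\F_r$ orbit relation, hence treeable, and finite extensions of treeable relations are treeable. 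Then $R=\bigcup_n R_n$ is an increasing union of treeable relations in which each $R_n$ has index $2$ in $R_{n+1}$ on a compressible structure.

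The main obstacle is that increasing unions of treeable relations need not be treeable. I would handle it by building the treeings coherently: choose the treeing $T_{n+1}$ of $R_{n+1}$ to contain $T_n$ up to finitely many edge replacements per class, exploiting that $R_{n+1}$ restricted to the level-$n$ transversal is the $\F_r$-relation extended by a single involution. If coherence fails, my fallback is Hjorth's criterion: the relation $R$ on the transversal $A_n=\{y_0=\dots=y_{n-1}=0\}\times Z$ equals the $\F_r$-orbit relation joined with the odometer of $2^n\Z$. The costs are then $1+2^{-n}(r-1)\to 1$, so the cost of $R$ is $1$. Combined with the ergodicity of $R$ and rich ergodicity of hyperfinite pieces, this may allow applying Kechris's strongly increasing exhaustion criterion directly rather than passing through treeability.
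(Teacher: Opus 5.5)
Part (ii) is correct and is the paper's argument. Part (i) has a genuine gap. Your plan is to prove that the orbit relation $R$ of the odometer $\times$ Bernoulli action is treeable and then apply Theorem~\ref{thm:treeable-rich}. This cannot succeed, because $R$ is not treeable, so no choice of ``coherent'' treeings $T_n$ exists.

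To see this, note that the $\Z$-orbit relation is an aperiodic hyperfinite subrelation of $R$. It is normalized by the $\F_r$-generators, which commute with the odometer. By Adams, a treeable relation with such a normal subrelation is amenable, hence hyperfinite. But the orbit relation of a free p.m.p.\ action of the nonamenable group $\Z\times\F_r$ is not hyperfinite. Via cost the same conclusion follows: $C_\mu(R)=1$, which is what your last paragraph asserts, and an aperiodic treeable relation of cost $1$ is hyperfinite.

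Your fallback does not repair this. Knowing $C_\mu(R)=1$ produces no ergodic $T\in[R]$ perpendicular to a given stage, so Proposition~\ref{prop:strongly-increasing} is never engaged.

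The way around this is never to tree $R$ itself. Instead, exhaust $R$ by \emph{ergodic} treeable subrelations and invoke Proposition~\ref{prop:rich-union}, which your proposal never uses. Your exhaustion $R_n$, generated by $F_n\times\operatorname{id}$ and the coordinatewise $\F_r$-action, is treeable and increases to $R$. It is still useless here, because no $R_n$ is ergodic: for instance, $\{y:y_n=0\}\times Z$ is $R_n$-invariant of measure $1/2$. So neither Theorem~\ref{thm:treeable-rich} nor the notion of rich ergodicity applies to it.

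The missing idea is the paper's twist of one free generator. In your notation, the construction runs as follows.
\begin{enumerate}
\item Partition $Z=\bigsqcup_{m\in\N}Z_m$ into sets of positive measure.
\item Let $s_1$ act by $(y,z)\mapsto(\sigma_m y,s_1z)$ for $z\in Z_m$, where $\sigma_m$ flips the first $m$ coordinates of $y$. Let $s_2,\dots,s_r$ act on $Z$ only.
\item This twisted action is ergodic. Ergodicity of $s_2$ on $Z$ forces an invariant set to be $A\times Z$, with $A$ invariant under every $\sigma_m$. Hence $A$ is invariant under every single-coordinate flip $\sigma_{m+1}\sigma_m$, so $A$ is trivial by the tail $0$--$1$ law.
\item Adding the flips of the first $n$ coordinates gives free actions of $(\Z_2)^n\times\F_r$. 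Their orbit relations are ergodic, treeable, and increasing.
\item These relations exhaust $R$: given $z'=\gamma z$, apply the twisted $\gamma$ and then a finite flip.
\end{enumerate}
Theorem~\ref{thm:treeable-rich} then applies to each stage, and Proposition~\ref{prop:rich-union} to the union.
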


The action in part~(i) is the product of the dyadic odometer action of
$\Z$ with the Bernoulli action of $\F_r$. We express its orbit relation
as an increasing union of ergodic treeable subrelations, using finite
coordinate flips to twist one free generator. Theorem~\ref{thm:treeable-rich}
and the preservation of rich ergodicity under increasing unions
\cite[Proposition~11.6]{Kechris} then give the conclusion. Part~(ii) is
the Bernoulli action discussed above. Hence rich ergodicity of a free
ergodic action is not determined by the acting group. The positive
example is consistent with the non-approximability criterion: in this
separate-coordinate product action, neither factor acts ergodically on
the whole product space.

The paper is organized as follows. Section~2 recalls the canonical Polish
topology on $\cS(E)$ and the basic results on rich ergodicity,
non-approximability, and strongly increasing exhaustions. Section~3
collects the necessary facts about cost and treeability, including the
relative Miller--Tserunyan decomposition theorem and its fixed-base
ergodic-generator corollary. In Section~4, we prove
Theorem~\ref{thm:treeable-rich}, constructing a strongly increasing
exhaustion in the non-hyperfinite case. Finally, Section~5 proves
Theorem~\ref{thm:rich_and_non_rich}: the Bernoulli action of
$\Z\times\F_r$ provides the non-richly ergodic example, while the product
of the dyadic odometer with the Bernoulli $\F_r$-action is shown to be
richly ergodic by expressing its orbit relation as an increasing union
of ergodic treeable subrelations.

\subsection*{Acknowledgements}

This paper is based on part of the author's Ph.D. thesis \cite{Dai}, written under the supervision of Clinton Conley. The exposition and proofs have been revised to give a self-contained presentation of the results on rich ergodicity. The author
is grateful to Clinton Conley for guidance, encouragement, and
many helpful discussions.

\section{Preliminaries}

\subsection{The space of subequivalence relations}

For a p.m.p. countable Borel equivalence relation (CBER) $E$ on $(X,\mu)$,
its full group is
\[
 [E]=\{T\in\operatorname{Aut}(X,\mu):\operatorname{graph}(T)\subseteq E\},
\]
where $\operatorname{Aut}(X,\mu)$ is the group of measure-preserving
automorphisms of $(X,\mu)$. The full group carries the uniform metric
\[
 d_u(S,T)=\mu(\{x:S(x)\neq T(x)\}).
\]
Let
\[
 \cS(E)=\{F:F\text{ is a p.m.p. CBER and }F\subseteq E\}.
\]
The map $F\mapsto[F]$ identifies $\cS(E)$ with a closed subspace of the
Wijsman space of closed subgroups of $([E],d_u)$
\cite[Proposition~4.1]{Kechris}. The resulting topology is called the
weak topology. For $T\in[E]$ and $F\in\cS(E)$, put
\[
 A_{T,F}=\{x:(x,T(x))\in F\}.
\]
Kechris's distance formula \cite[Proposition~4.2]{Kechris} is
\begin{equation}\label{eq:distance}
 d_u(T,[F])=1-\mu(A_{T,F}).
\end{equation}
Thus $F_n\to F$ weakly exactly when the quantities in
\eqref{eq:distance} converge for every $T\in[E]$.

There is also a strong topology defined using a uniquely generating
sequence for $E$. The weak and strong topologies agree
\cite[Theorem~4.15]{Kechris}; we call their common topology the
\emph{canonical Polish topology} on $\cS(E)$. We will use the monotone
convergence property
\[
 F_n\uparrow F\quad\text{or}\quad F_n\downarrow F
 \quad\Longrightarrow\quad F_n\longrightarrow F
\]
from \cite[Proposition~4.6]{Kechris}.

\subsection{Rich ergodicity, non-approximability, and perpendicularity}

\begin{definition}
For an ergodic p.m.p. CBER $E$, set
\[
 \operatorname{ERG}_E=\{F\in\cS(E):F\text{ is ergodic}\}.
\]
The relation $E$ is \emph{richly ergodic} if $\operatorname{ERG}_E$ is
dense in $\cS(E)$.
\end{definition}

The set $\operatorname{ERG}_E$ is $G_\delta$
\cite[Theorem~10.1]{Kechris}, so density is equivalent to comeagerness.

\begin{definition}
An ergodic p.m.p. CBER $E$ is \emph{non-approximable} if every increasing
exhaustion $F_0\subseteq F_1\subseteq\cdots$ with $E=\bigcup_nF_n$ is
trivial in the following sense: for some $n$ and some positive-measure
Borel set $B$,
\[
 F_n\mathbin{\upharpoonright}B=E\mathbin{\upharpoonright}B.
\]
\end{definition}

\begin{proposition}[Kechris {\cite[Proposition~11.3]{Kechris}}]
\label{prop:nonapprox-not-rich}
If $E$ is ergodic and non-approximable, then $E$ is not richly ergodic.
\end{proposition}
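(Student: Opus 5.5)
We argue by contraposition. Assume $E$ is ergodic and non-approximable, and suppose toward a contradiction that $\operatorname{ERG}_E$ is dense in $\cS(E)$. Since $\operatorname{ERG}_E$ is $G_\delta$ \cite[Theorem~10.1]{Kechris}, it is then comeager. The plan is to exhibit a comeager set of subrelations that are \emph{not} ergodic; this contradicts the Baire category theorem in the Polish space $\cS(E)$.

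The key step is to show that for each fixed increasing exhaustion $F_0\subseteq F_1\subseteq\cdots$ of $E$, the sets
\[
 U_n=\{F\in\cS(E): F\subseteq F_n \text{ modulo a set of small measure}\}
\]
have suitable density properties. A more robust route is the following. Fix a countable dense family $T_k\in[E]$. For $F\in\cS(E)$, define the increasing sequence of relations $F^{(m)}$ generated by $F$ and $T_0,\dots,T_m$. Applying non-approximability to each exhaustion $E=\bigcup_m F^{(m)}$ would only say that some $F^{(m)}$ agrees with $E$ locally, which is not obviously helpful. So instead I would use non-approximability in the following form.

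\emph{Claim.} Fix an exhaustion $E=\bigcup_n E_n$ by finite-generated pieces $E_n=\langle T_0,\dots,T_n\rangle$. By non-approximability, there are $n$ and $B$ with $\mu(B)>0$ such that $E_n\upharpoonright B=E\upharpoonright B$. Then $E\upharpoonright B$ is generated by finitely many partial maps. Using this, one shows that the set
\[
 \mathcal{N}=\{F\in\cS(E): F\upharpoonright B' = E\upharpoonright B' \text{ for no positive-measure } B'\}
\]
is comeager. Here is why: for a generic $F$, a small perturbation (shrinking $F$ to a subrelation) kills all local coincidences with $E$. This uses \eqref{eq:distance}. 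Membership in $\mathcal{N}$ is a $G_\delta$ condition expressed via the finitely many generators of $E\upharpoonright B$ and its countably many translates. Density holds because, for any $F$, intersecting $F$ with a finite relation supported on a large set gives a nearby subrelation, and such a subrelation has no local coincidence with the infinite-class relation $E$.

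Next, take $F$ generic in $\mathcal{N}\cap\operatorname{ERG}_E$ and build an exhaustion of $E$ from it. Let $F_m$ be the relation generated by $F$ together with the restrictions of $T_0,\dots,T_m$ to sets of measure at most $2^{-m}$ whose union shrinks to a null set. Arrange this so that $\bigcup_m F_m=E$ using ergodicity of $F$: conjugating partial maps into $F$ lets the small pieces generate all of $E$. Non-approximability gives $B$ with $F_m\upharpoonright B=E\upharpoonright B$. The main obstacle is then to convert this into a contradiction with $F\in\mathcal{N}$. Since $F_m$ differs from $F$ only by finitely many partial maps on small sets, I would first shrink $B$ to avoid their domains up to $F$-saturation. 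This is the delicate measure bookkeeping step, and I would try it first via Kechris's monotone convergence property \cite[Proposition~4.6]{Kechris}, choosing the exceptional sets generically.
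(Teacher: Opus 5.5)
Your central Claim is false under exactly the hypothesis you are working with, so the argument cannot be repaired along these lines. Fix $T_k\in[E]$ with $E=\bigcup_k\operatorname{graph}(T_k)$. By \eqref{eq:distance}, the set $U_m=\{F\in\cS(E):\mu(A_{T_k,F})>1-2^{-m}\text{ for all }k\leq m\}$ is an open neighborhood of $E$. Suppose $\mathcal{N}$ were dense. Then you could choose $F_m\in U_m\cap\mathcal{N}$ and put $E_n:=\bigcap_{m\geq n}F_m$. These relations increase. Since $\sum_{m\geq k}\mu(X\setminus A_{T_k,F_m})<\infty$ for each $k$, Borel--Cantelli gives $\bigcup_nE_n=E$. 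Since $E_n\subseteq F_n\subseteq E$, an equality $E_n\upharpoonright B=E\upharpoonright B$ would give $F_n\upharpoonright B=E\upharpoonright B$, contradicting $F_n\in\mathcal{N}$. So $E$ would be approximable. Hence, for non-approximable $E$, the set $\mathcal{N}$ misses a whole neighborhood of $E$ and is certainly not comeager.

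Your density justification also fails on its own. The same tail-intersection trick shows that any limit of finite subrelations is hyperfinite. So cutting $F$ down to a finite subrelation does not stay close to $F$ unless $F$ is hyperfinite, and an ergodic non-approximable $E$ is never hyperfinite.

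The second half does not close either. Every stage $F_m$ you build contains the ergodic relation $F$, so it is ergodic. For an ergodic $F'\subseteq E$, the equality $F'\upharpoonright B=E\upharpoonright B$ with $\mu(B)>0$ forces $F'=E$: if $x\mathrel{E}y$ lie in the conull set $[B]_{F'}$, pick $x\mathrel{F'}x'\in B$ and $y\mathrel{F'}y'\in B$. Then $x'\mathrel{E}y'$, hence $x'\mathrel{F'}y'$ and $x\mathrel{F'}y$. Thus non-approximability only tells you that some $F_m$ equals $E$, which is perfectly compatible with $F\neq E$. No shrinking of $B$ can avoid the added domains ``up to $F$-saturation'', since the $F$-saturation of every positive-measure set is conull.

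The missing idea is to put the exhaustion \emph{inside} ergodic proper subrelations rather than \emph{above} one. Since $E$ is aperiodic, $(E\upharpoonright(X\setminus A_j))\cup\operatorname{id}_X\uparrow E$ for $A_j\downarrow$ with $\mu(A_j)\to0$. By monotone convergence, $U_m\setminus\{E\}$ is therefore a nonempty open set, and density of $\operatorname{ERG}_E$ provides an ergodic $F_m\in U_m$ with $F_m\neq E$. Take $E_n=\bigcap_{m\geq n}F_m$ as above. Local agreement of $E_n$ with $E$ would force $F_n=E$ by the argument just given. Hence rich ergodicity implies approximability, which is the contrapositive of the statement. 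The paper itself gives no proof here and simply cites Kechris.
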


For subequivalence relations $R,S$ on $X$, write $R\vee S$ for the
smallest equivalence relation containing both $R$ and $S$.

\begin{definition}
For p.m.p. CBERs $R,S$ on $X$, write $R\perp S$ if there is no reduced
alternating cycle
\[
 x_0\,R\,x_1\,S\,x_2\,R\,\cdots\,S\,x_{2m}=x_0,
 \qquad m\geq1,
\]
with consecutive vertices distinct. Equivalently, $R\vee S$ is the
free product $R*S$.
\end{definition}

Perpendicularity is stronger than transversality: $R\perp S$ implies
$R\cap S=\operatorname{id}_X$, but the converse need not hold.

\begin{definition}
Let $E_0\subseteq E_1\subseteq\cdots$ and $E=\bigcup_nE_n$. The sequence
$(E_n)$ is \emph{strongly increasing} if, for every $n$, there is an
ergodic $T\in[E]$ such that
\[
 E_n\perp E_T.
\]
Here $E_T$ is the orbit equivalence relation generated by $T$.
\end{definition}

\begin{proposition}[Kechris {\cite[Proposition~11.4]{Kechris}}]
\label{prop:strongly-increasing}
If $(E_n)$ is strongly increasing and $E=\bigcup_nE_n$, then $E$ is
richly ergodic.
\end{proposition}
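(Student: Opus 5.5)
The plan is to approximate an arbitrary $F\in\cS(E)$ in two steps: first by $F\cap E_n$, and then by an ergodic relation of the form $(F\cap E_n)*R_k$, where $R_k\subseteq E_T$ is obtained by restricting $E_T$ to a shrinking set. Both approximations will come from the monotone convergence property of \cite[Proposition~4.6]{Kechris}, and perpendicularity is what makes the second one decreasing with the right limit.

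\emph{Step 1 (reduce to a fixed stage).} Fix $F\in\cS(E)$. Since $E_n\uparrow E$, we have $F_n:=F\cap E_n\uparrow F$. Monotone convergence gives $F_n\to F$. It therefore suffices to show that, for each fixed $n$, $F_n$ lies in the closure of $\operatorname{ERG}_E$. Fix $n$ and an ergodic $T\in[E]$ with $E_n\perp E_T$. Since $F_n\subseteq E_n$ and every subrelation of a perpendicular pair is again perpendicular, we get $F_n\perp R$ for every $R\subseteq E_T$.

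\emph{Step 2 (ergodic relations $F_n*R_k$).} Let $x\mapsto\nu_x$ be the ergodic decomposition of $F_n$. Each $\nu_x$ is nonatomic, up to a null set of components; the finite-class components can be handled separately, or avoided by the same construction with counting measures. I would choose Borel sets $A_0\supseteq A_1\supseteq\cdots$ with $\mu(A_k)\to0$ and $\nu_x(A_k)>0$ for almost every $x$ and every $k$. For instance, one can measurably pick sets of conditional measure $\min(\nu_x\text{-mass},2^{-k})$ inside each component, for example by using a Borel linear order and taking initial segments of conditional measure $2^{-k}$.

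Put $R_k=E_T\upharpoonright A_k\cup\operatorname{id}_X$; this is the orbit relation of the induced map $T_{A_k}$, which is ergodic on $A_k$. Then $F'_k:=F_n\vee R_k=F_n*R_k$ is ergodic. Indeed, if $B$ is $F'_k$-invariant, then $B\cap A_k$ is $T_{A_k}$-invariant, so it is null or all of $A_k$. Since $B$ is also $F_n$-invariant and $A_k$ meets every $F_n$-ergodic component in positive measure, $B$ is null or conull.

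\emph{Step 3 (convergence).} The relations $F'_k$ are decreasing in $k$, because the $R_k$ are. I claim that $\bigcap_kF'_k=F_n$. If $(x,y)\in F'_k$ for all $k$, then by $F_n\perp E_T$ the pair $(x,y)$ has a \emph{unique} reduced alternating path in $F_n*E_T$. For each $k$, the path witnessing $(x,y)\in F_n*R_k$ is a reduced alternating path in $F_n*E_T$, so it is this same path. All its $E_T$-steps therefore occur inside $\bigcap_kA_k$, which is null. Hence, off a null set, the path has no $E_T$-steps and $(x,y)\in F_n$.

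Monotone convergence now gives $F'_k\downarrow F_n$, so $F'_k\to F_n$ with each $F'_k\in\operatorname{ERG}_E$. Together with Step 1, this shows that $\operatorname{ERG}_E$ is dense, so $E$ is richly ergodic.

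The main obstacle I expect is Step 3: justifying carefully that the reduced path is independent of $k$ (the uniqueness of normal forms in $F_n*E_T$), and handling the null sets. A secondary technical point is the measurable choice of the decreasing sets $A_k$ that meet every ergodic component of $F_n$.
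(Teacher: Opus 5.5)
\textbf{There is a genuine gap: the construction breaks down whenever $F_n$ has finite classes on a set of positive measure.} Your argument is correct when $F_n=F\cap E_n$ is aperiodic. The remark about finite classes and ``counting measures'' cannot repair the general case.

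Ergodicity of $F_n*R_k$, with $R_k=E_T\upharpoonright A_k\cup\operatorname{id}_X$, forces $A_k$ to meet almost every $F_n$-class. Indeed, the set $X\setminus[A_k]_{F_n}$ is $F_n$-invariant and is fixed pointwise by $R_k$, so it is $F_n*R_k$-invariant. Let $Z$ be the $F_n$-invariant set on which the classes are finite. Then $A_k$ contains a point of almost every class in $Z$, so $\mu(A_k)\geq\int_Z|[x]_{F_n}|^{-1}\,d\mu(x)>0$ for every $k$. Hence $\mu(A_k)\not\to0$, and $A_\infty=\bigcap_kA_k$ is non-null. Your own unique-normal-form argument in Step~3 then shows $\bigcap_kF'_k=F_n*(E_T\upharpoonright A_\infty\cup\operatorname{id}_X)$, which strictly contains $F_n$. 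The case $F=\operatorname{id}_X$ shows the problem most clearly. The ergodic components are Dirac masses, so each $A_k$ must be conull and $F'_k=E_T$ for all $k$. Your argument therefore never shows that $\operatorname{id}_X\in\overline{\operatorname{ERG}_E}$, which is part of what rich ergodicity requires.

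\textbf{What is missing, and one way to supply it.} You need a decreasing sequence of subrelations $R_k\subseteq E_T$ that are ergodic on all of $X$ and satisfy $\bigcap_kR_k=\operatorname{id}_X$. With such $R_k$, the ergodic decomposition and the sets $A_k$ become unnecessary. One construction:
\begin{itemize}
\item By Dye's theorem, choose a weakly mixing $T'$ with $E_{T'}=E_T$, and put $R_k=E_{T'^{k!}}$.
\item Each $T'^{k!}$ is ergodic, and $R_{k+1}\subseteq R_k$.
\item For $j\neq0$, $(x,T'^jx)\in R_k$ forces $k!\mid j$, so $\bigcap_kR_k=\operatorname{id}_X$.
\item $F_n*R_k$ is ergodic simply because it contains $R_k$.
\end{itemize}
Your Step~3 argument then goes through with $\bigcap_kR_k=\operatorname{id}_X$ in place of the nullity of $\bigcap_kA_k$. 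It gives $F_n*R_k\downarrow F_n$ for every $F_n$, including those with finite classes.

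The paper itself does not reprove this proposition; it quotes it from Kechris, so your argument has to stand on its own.
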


We also use the following two facts.

\begin{proposition}[Kechris {\cite[Proposition~11.6]{Kechris}}]
\label{prop:rich-union}
If $E_0\subseteq E_1\subseteq\cdots$ are richly ergodic, then
$E=\bigcup_nE_n$ is richly ergodic.
\end{proposition}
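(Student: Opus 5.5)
The plan is to approximate an arbitrary $F\in\cS(E)$ first by its traces $F\cap E_n$, and then, inside a single $\cS(E_n)$, by ergodic subrelations of $E_n$. These are automatically ergodic subrelations of $E$. Note first that $E$ is ergodic: each $E_n$ is richly ergodic, hence ergodic by definition, and $E_0\subseteq E$.

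Fix $F\in\cS(E)$ and a weak-open neighbourhood $U$ of $F$ in $\cS(E)$. Put $F_n=F\cap E_n$. This is a p.m.p. CBER with $F_n\in\cS(E_n)\subseteq\cS(E)$. Since $E=\bigcup_nE_n$ increases, $F_n\uparrow F$. By the monotone convergence property \cite[Proposition~4.6]{Kechris}, $F_n\to F$ in $\cS(E)$, so we may fix $n$ with $F_n\in U$. Rich ergodicity of $E_n$ gives ergodic $G_k\in\cS(E_n)$ with $G_k\to F_n$ in the canonical topology of $\cS(E_n)$. If we know that this convergence also holds in $\cS(E)$, then $G_k\in U$ for large $k$. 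Each $G_k$ is an ergodic subrelation of $E$, so $\operatorname{ERG}_E$ meets $U$, and density follows.

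The remaining step, which I expect to be the main obstacle, is continuity of the inclusion $\cS(E_n)\hookrightarrow\cS(E)$. By \eqref{eq:distance}, we need that for each $T\in[E]$ the map $G\mapsto\mu(A_{T,G})$ is continuous on $\cS(E_n)$. Put $D=\{x:(x,T(x))\in E_n\}$. For $G\subseteq E_n$ we have $A_{T,G}\subseteq D$, and $T\upharpoonright D$ is a partial injection with graph in $E_n$. Since $E_n$ is ergodic, $T\upharpoonright D$ extends to some $S\in[E_n]$ (the complements $X\setminus D$ and $X\setminus T(D)$ have equal measure). Then
\[
\mu(A_{T,G})=\mu(A_{S,G}\cap D).
\]
So it suffices to show that $G\mapsto\mu(A_{S,G}\cap C)$ is continuous on $\cS(E_n)$ for every $S\in[E_n]$ and Borel $C$. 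The plain weak formula only gives this for $C=X$.

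To get the localized version, I would use the equality of the weak and strong topologies \cite[Theorem~4.15]{Kechris}. The strong topology, built from a uniquely generating sequence, is controlled by measures of sets of the form $\{x\in C:(x,S(x))\in G\}$. Alternatively, one could approximate $C$ by finite Boolean combinations and pass to the limit by a $3\varepsilon$ argument, using that $\mu(A_{S,G}\cap C)$ changes by at most $\mu(C\triangle C')$ when $C$ is replaced by $C'$. Either route yields the continuity, and with it the density of $\operatorname{ERG}_E$, i.e.\ rich ergodicity of $E$.
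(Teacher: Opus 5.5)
Your proof is correct and is essentially the standard argument behind Kechris's Proposition~11.6, which the paper only cites without proof. Monotone convergence gives $F\cap E_n\to F$, and density of ergodic relations in $\cS(E_n)$ transfers to $\cS(E)$ once the inclusion $\cS(E_n)\hookrightarrow\cS(E)$ is continuous; you establish that correctly via the reduction to $G\mapsto\mu(A_{S,G}\cap C)$ and the equality of the weak and strong topologies (in the strong topology, $G\mapsto A_{S,G}$ is continuous into the measure algebra). Your fallback ``Boolean combinations'' route is underspecified, because it presupposes continuity for some seed family of sets $C$, but the first route already suffices.
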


\begin{proposition}[Kechris {\cite[Proposition~11.7]{Kechris}}]
\label{prop:hyperfinite-rich}
Every ergodic hyperfinite p.m.p. CBER is richly ergodic.
\end{proposition}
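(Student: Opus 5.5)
Since $\operatorname{ERG}_E$ is $G_\delta$, I will prove density directly. Fix $F\in\cS(E)$ and approximate it by ergodic subrelations. As a subrelation of the hyperfinite relation $E$, $F$ is hyperfinite, so $F=\bigcup_nF_n$ with $F_n$ increasing and having finite classes. By the monotone convergence property, $F_n\to F$. It therefore suffices to show that every finite-class $F'\in\cS(E)$ is a limit of ergodic subrelations of $E$. The case $F'=F_n$ then gives ergodic subrelations converging to $F$, via a diagonal argument in the Polish space $\cS(E)$.

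So let $F'\subseteq E$ have finite classes. Then $F'$ is smooth, so there is a Borel transversal $A$ meeting every $F'$-class in exactly one point. Since $E$ is ergodic and the $F'$-saturation of $A$ is $X$, the set $A$ has positive measure. Moreover $E\upharpoonright A$ is an ergodic hyperfinite p.m.p.\ relation on $(A,\mu_A)$. By Dye's theorem, $E\upharpoonright A$ is the orbit relation of some weakly mixing automorphism $S$ of $A$. I choose such an $S$ deliberately, because then every power $S^m$, $m\neq0$, is ergodic. Put
\[
 R_k=E_{S^{k!}}\subseteq E\upharpoonright A,\qquad G_k=F'\vee R_k\subseteq E.
\]
Each $R_k$ is ergodic on $A$, and $R_{k+1}\subseteq R_k$ since $k!\mid(k+1)!$. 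Also $\bigcap_kR_k=\operatorname{id}_A$: if $S^mx=x'$ with $m$ a multiple of every $k!$, then $m=0$, using freeness of $S$ off a null set.

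Next I check two properties of the $G_k$. First, each $G_k$ is ergodic. A $G_k$-invariant set $B$ has $B\cap A$ invariant under $R_k$, so $\mu(B\cap A)\in\{0,\mu(A)\}$. Since $B$ is $F'$-invariant and $A$ is an $F'$-transversal, $B$ is the $F'$-saturation of $B\cap A$, hence null or conull. Second, $G_k\downarrow F'$. Because $A$ is a transversal, the classes of $F'\vee R_k$ are exactly the $F'$-saturations of $R_k$-classes in $A$. Hence $x\,G_k\,y$ if and only if $(\pi x,\pi y)\in R_k$, where $\pi:X\to A$ sends a point to the unique point of $A$ in its $F'$-class. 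Thus $\bigcap_kG_k=\{(x,y):(\pi x,\pi y)\in\bigcap_kR_k\}=F'$. The monotone convergence property \cite[Proposition~4.6]{Kechris} then gives $G_k\to F'$, completing the density argument.

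The main obstacle I anticipate is producing ergodic relations decreasing to the identity. Naive coordinatewise truncations of $E_0$ lose ergodicity, and odometer powers $T^{2^k}$ are not ergodic. Total ergodicity of a weakly mixing generator, available through Dye's theorem, is what resolves this. The only other care needed is the null-set bookkeeping in the identity $\bigcap_kG_k=F'$, via the reduction map $\pi$.
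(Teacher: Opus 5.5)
Your proof is correct. The paper gives no argument for this proposition; it simply quotes Kechris. So your write-up is a self-contained replacement, not a variant of anything in the text.

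Your structure has two steps:
\begin{enumerate}
\item Use hyperfiniteness and monotone convergence to reduce to a finite $F'$.
\item Approximate $F'$ from above by the ergodic relations $F'\vee E_{S^{k!}}$, where $S$ is totally ergodic on an $F'$-transversal $A$.
\end{enumerate}

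This is necessarily different in spirit from the paper's main technique. A hyperfinite ergodic $E$ admits no strongly increasing exhaustion, so Proposition~\ref{prop:strongly-increasing} cannot be used. Indeed, suppose $E_n\perp E_T$ with $T\in[E]$ ergodic and $E_n$ nontrivial on a positive-measure set. Then
\[
C_\mu(E_n\vee E_T)=C_\mu(E_n)+1>1,
\]
which is impossible inside a hyperfinite relation. This is why the paper treats the hyperfinite case separately, and why an approximation-from-above argument like yours is the natural route.

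Your route also shows that hyperfiniteness enters only in step~1, the density of finite subrelations. Step~2 works for any ergodic $E$. The restriction $E\upharpoonright A$ always contains an ergodic hyperfinite subrelation, and by Dye's theorem that subrelation is generated by a weakly mixing $S\in[E\upharpoonright A]$. You never need $E_S$ to be all of $E\upharpoonright A$.

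Two cosmetic points:
\begin{itemize}
\item Your ``diagonal argument'' is just the closedness of $\overline{\operatorname{ERG}_E}$.
\item $\mu(A)>0$ follows from the countability of the $F'$-classes alone; ergodicity is not needed there.
\end{itemize}
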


\section{Cost and treeable CBERs}

All graphs below are symmetric, irreflexive, locally countable Borel
graphs; any loops arising in edge-sliding constructions are discarded.
A graph $G$ on $X$ is \emph{p.m.p.} if its connectedness relation
$E_G$ preserves $\mu$. It is a \emph{graphing} of $E$ if $E_G=E$.
An acyclic graphing is a \emph{treeing}, and a relation admitting a
treeing is \emph{treeable}. A relation is \emph{hyperfinite} if it is
an increasing union of finite Borel equivalence relations; equivalently,
it is generated by a Borel automorphism \cite{KM}. A graph is called
ergodic or hyperfinite when its connectedness relation has that property.

The \emph{cost} of a p.m.p. graph $G$ is
\[
 C_\mu(G)=\frac12\int_X\deg_G(x)\,d\mu(x),
\]
and the cost of a p.m.p. CBER $E$ is
\[
 C_\mu(E)=\inf\{C_\mu(G):G\text{ is a Borel graphing of }E\}.
\]
We use the same definition for the restriction of $\mu$ to a Borel set,
without normalizing. A fundamental theorem of Gaboriau states that
\emph{treeings attain cost}: if $G$ is a treeing of $E$, then
$C_\mu(G)=C_\mu(E)$ \cite[Theorem~IV.1]{GaboriauCost}.

We next recall the compression formula, which increases the
normalized cost upon restriction to a small positive-measure
set when $C_\mu(E)>1$.

\begin{theorem}[Gaboriau {\cite[Proposition~II.6]{GaboriauCost}}]
\label{thm:cost-compression}
Let $E$ be a p.m.p. CBER on $(X,\mu)$, and let $S\subseteq X$ be a
Borel complete section for $E$. Then
\[
 C_\mu(E)=C_{\mu\upharpoonright S}(E\upharpoonright S)+\mu(X\setminus S),
\]
where $\mu\upharpoonright S$ denotes the unnormalized restriction.
\end{theorem}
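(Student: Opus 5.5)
This is Gaboriau's compression formula, stated in the paper as a cited result. My plan follows the standard argument: prove the two inequalities separately by transporting graphings between $X$ and $S$. Write $Y=X\setminus S$.

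Since $S$ is a complete section, I fix a Borel map $\pi\colon X\to S$ with $\pi(x)\mathrel{E}x$ and $\pi\upharpoonright S=\mathrm{id}$. Such a map comes from choosing a countable family $\{\phi_i\}\subseteq[E]$ with $E=\bigcup_i\operatorname{graph}(\phi_i)$ (Feldman--Moore) and letting $\pi(x)=\phi_i(x)$ for the least $i$ with $\phi_i(x)\in S$.

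For $\leq$: take any graphing $G_S$ of $E\upharpoonright S$ and add the edges $\{x,\pi(x)\}$ for $x\in Y$. The resulting graph generates $E$, since every point is joined to $S$ and $S$ is internally connected within each class. The added edges contribute exactly $\mu(Y)$ to the cost, because each $x\in Y$ contributes one edge. Taking the infimum over $G_S$ gives $C_\mu(E)\le C_{\mu\upharpoonright S}(E\upharpoonright S)+\mu(Y)$.

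For $\geq$: take any graphing $G$ of $E$ with finite cost and build a graphing of $E\upharpoonright S$ whose cost is at most $C_\mu(G)-\mu(Y)$.
\begin{enumerate}
\item Reduce first to the case where $\pi$ is replaced by a tree-like retraction along $G$. Let $d(x)$ be the $G$-distance from $x$ to $S$; it is finite almost everywhere because $S$ meets every class. For each $x\in Y$, Borel-select a neighbour $p(x)$ with $d(p(x))=d(x)-1$. The edges $\{x,p(x)\}$, $x\in Y$, form a forest $P\subseteq G$ in which every component contains exactly one point of $S$. Let $\rho(x)\in S$ be the root of the component of $x$. The forest $P$ has cost exactly $\mu(Y)$.
\item Slide every remaining edge $\{x,y\}\in G\setminus P$ to $\{\rho(x),\rho(y)\}$, discarding loops. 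This produces a graph $G'$ on $S$. Its cost is at most $C_\mu(G)-C_\mu(P)=C_\mu(G)-\mu(Y)$. The key point is that sliding is implemented by partial injections, so the measure of the edge set does not increase. To see this, decompose $G\setminus P$ into countably many partial automorphisms $\psi$ and note that the pushforward edges $\{\rho(x),\rho(\psi x)\}$ are counted by the same mass.
\item Check that $G'$ generates $E\upharpoonright S$. Given $s\mathrel{E}s'$ in $S$, take a $G$-path between them. Delete its $P$-edges and apply $\rho$ to each vertex. Consecutive remaining edges then connect vertices whose $\rho$-images agree across $P$-edges, so we obtain a $G'$-path from $s$ to $s'$.
\end{enumerate}
Taking the infimum over $G$ yields the reverse inequality.

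\textbf{Main obstacle.} The delicate point is the cost bookkeeping in step 2: one must verify that sliding edges to roots does not increase cost. This relies on measure preservation, so that pushing edge mass along the Borel map $(x,y)\mapsto(\rho(x),\rho(y))$ preserves total counted mass; I would phrase it using the counting measure on $E$ via the mass transport principle. A secondary point is the case $C_\mu(E)=\infty$, which is handled trivially by the $\leq$ direction.
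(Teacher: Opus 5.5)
The paper gives no proof of this statement; it is quoted from Gaboriau, so there is no in-paper argument to compare against. Your proof is the standard argument for the compression formula and it is correct: for $\leq$ you attach each point of $X\setminus S$ to $S$ by a single edge, and for $\geq$ you retract along a geodesic forest $P\subseteq G$ and slide the remaining edges to the roots.

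The one step to state precisely is Step~2. Since $\rho$ is far from injective, the slid edges need not carry \emph{the same} mass, and the remark about partial injections is not what makes the step work. What holds, and what suffices, is an inequality. Put $A=G\setminus P$ and $\Phi(x,y)=(\rho(x),\rho(y))$. Let $M(B)=\int_X|B_x|\,d\mu(x)$ with $B_x=\{y:(x,y)\in B\}$, so that $C_\mu=\tfrac12M$. Then
\[
M(\Phi(A))=\int_S|\Phi(A)_s|\,d\mu(s)\le\int_S\bigl|\{(x,y)\in A:\rho(x)=s\}\bigr|\,d\mu(s)=\int_X|A_x|\,d\mu(x)=M(A).
\]
The inequality holds because you can pick one preimage in $A$ for each $t$ with $(s,t)\in\Phi(A)$. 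The last equality is the mass transport principle, applied by sending mass $|A_x|$ from $x$ to $\rho(x)$; this is where measure preservation enters. Discarding loops only lowers the cost, so $C_\mu(G')\le\tfrac12\bigl(M(G)-M(P)\bigr)=C_\mu(G)-\mu(X\setminus S)$.

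Two minor bookkeeping points remain. Take $\phi_0=\mathrm{id}$ so that $\pi\upharpoonright S=\mathrm{id}$. The graph $G'$ is Borel by Lusin--Novikov, because $\Phi$ is countable-to-one.
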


For a positive-measure Borel set $S$, write
$\mu_S(B)=\mu(B)/\mu(S)$ for Borel $B\subseteq S$.

\begin{corollary}\label{cor:zoom_in_cost}
If $E$ is ergodic, then for every positive-measure Borel set $S\subseteq X$,
\[
 C_{\mu_S}(E\upharpoonright S)
 =1+\frac{C_\mu(E)-1}{\mu(S)}.
\]
In particular, if $C_\mu(E)>1$, these normalized costs become arbitrarily
large as $\mu(S)$ tends to zero.
\end{corollary}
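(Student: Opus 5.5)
The plan is to apply Theorem~\ref{thm:cost-compression} with $S$ itself as the complete section. By ergodicity of $E$, every positive-measure Borel set $S$ meets almost every $E$-class: the saturation $[S]_E$ is an $E$-invariant Borel set of positive measure, hence conull. After discarding a null invariant set, $S$ is a Borel complete section, and Theorem~\ref{thm:cost-compression} gives
\[
 C_\mu(E)=C_{\mu\upharpoonright S}(E\upharpoonright S)+1-\mu(S).
\]

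Next we pass from the unnormalized measure $\mu\upharpoonright S$ to the probability measure $\mu_S=\mu\upharpoonright S/\mu(S)$. Cost is linear in the measure: for any graphing $G$ of $E\upharpoonright S$,
\[
 C_{\mu_S}(G)=\frac12\int_S\deg_G\,d\mu_S=\frac{1}{\mu(S)}\,C_{\mu\upharpoonright S}(G),
\]
and taking infima over graphings yields
\[
 C_{\mu_S}(E\upharpoonright S)=\frac{C_{\mu\upharpoonright S}(E\upharpoonright S)}{\mu(S)}.
\]
Substituting into the displayed formula gives
\[
 C_{\mu_S}(E\upharpoonright S)=\frac{C_\mu(E)-1+\mu(S)}{\mu(S)}=1+\frac{C_\mu(E)-1}{\mu(S)}.
\]

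For the final assertion, if $C_\mu(E)>1$ then the numerator $C_\mu(E)-1$ is a fixed positive number. Hence the right-hand side tends to $+\infty$ as $\mu(S)\to0$; nonatomicity guarantees that sets of arbitrarily small positive measure exist.

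The only point needing care is the complete-section step, which is where ergodicity enters. The case $C_\mu(E)=\infty$ is consistent with the formula, both sides being infinite.
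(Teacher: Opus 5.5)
Your proposal is correct and follows the paper's proof: ergodicity makes the saturation of $S$ conull, so Theorem~\ref{thm:cost-compression} applies with $S$ as a complete section. The result then follows from the rescaling identity $C_{\mu\upharpoonright S}(E\upharpoonright S)=\mu(S)\,C_{\mu_S}(E\upharpoonright S)$.
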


\begin{proof}
Ergodicity implies that the $E$-saturation of $S$ is conull. Apply
Theorem~\ref{thm:cost-compression} on this saturation and use
$C_{\mu\upharpoonright S}(E\upharpoonright S)
=\mu(S)C_{\mu_S}(E\upharpoonright S)$.
\end{proof}

We also recall the restriction property of treeability.

\begin{theorem}[Jackson--Kechris--Louveau
{\cite[Proposition~3.3(i)]{JKL}}]\label{thm:tree_restriction}
If $E$ is treeable and $A\subseteq X$ is Borel, then
$E\upharpoonright A$ is treeable.
\end{theorem}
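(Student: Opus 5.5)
We first reduce to the case where $A$ is a complete section. Then we contract the components of a Borel rooted forest, each rooted at a point of $A$, and observe that contracting connected pieces of a tree yields a tree.

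\emph{Reduction.} Fix a Borel treeing $G$ of $E$, and let $[A]_E$ denote the $E$-saturation of $A$; it is Borel by Lusin--Novikov. Since $[A]_E$ is $E$-invariant, $G\upharpoonright[A]_E$ is still an acyclic graphing of $E\upharpoonright[A]_E$. Replacing $X$ by $[A]_E$, we may therefore assume that $A$ is a complete section for $E$.

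\emph{The rooted forest.} Let $d(x)\in\N$ be the $G$-distance from $x$ to $A$. It is finite because $A$ meets every class, and it is Borel because $G$ is locally countable and Lusin--Novikov applies. Using a Borel enumeration of $G$-neighbours, for each $x\notin A$ let $p(x)$ be the first neighbour $y$ of $x$ with $d(y)=d(x)-1$. Then $p\colon X\setminus A\to X$ is Borel, and the edges $\{x,p(x)\}$ form a Borel subforest $P\subseteq G$. Iterating $p$ exactly $d(x)$ times lands in $A$, which defines a Borel retraction $f\colon X\to A$ with $f(x)\,E\,x$. The fibres $f^{-1}(a)$ are precisely the connected components of $P$. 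Each fibre is a subtree of $G$ containing exactly one point of $A$, namely its root $a$.

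\emph{Contraction.} Define a graph $H$ on $A$ by declaring $a\,H\,b$ for $a\neq b$ whenever some edge $\{x,y\}\in G\setminus P$ has $f(x)=a$ and $f(y)=b$. This graph is Borel, being the image of a Borel set under a countable-to-one map, and it is locally countable.
\begin{itemize}
\item \emph{Connectedness.} A $G$-path between two points $a,b\in A$ projects under $f$ to an $H$-path. Hence $E_H=E\upharpoonright A$, and trivially $E_H\subseteq E$.
\item \emph{No loops.} An edge of $G\setminus P$ inside a single fibre would close a cycle with the $P$-path joining its endpoints.
\item \emph{No multiplicity.} Two distinct $G\setminus P$ edges between the same pair of fibres would, together with paths inside those two subtrees, produce a $G$-cycle.
\item \emph{Acyclicity.} A reduced $H$-cycle $a_0,a_1,\dots,a_k=a_0$ with $k\geq3$ lifts to a closed $G$-walk. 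This walk alternates between the chosen connecting edges, which are pairwise distinct, and paths inside the fibres. Each connecting edge is traversed exactly once, so the walk contains a cycle, contradicting acyclicity of $G$.
\end{itemize}
Thus $H$ is a Borel treeing of $E\upharpoonright A$.

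The main point requiring care is acyclicity of the contraction. Everything else is a routine use of Lusin--Novikov uniformization to keep all choices Borel.
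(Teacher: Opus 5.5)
Your proof is correct. The paper gives no proof of its own here and simply cites Jackson--Kechris--Louveau; your argument (nearest-point retraction onto $A$, then contracting the fibres of the resulting Borel forest) is the standard proof of that result. It is also the same contraction device the paper uses in the proof of Theorem~\ref{thm:s4-treeable-richly-ergodic}, where the intervals $I_y$ are contracted to obtain the treeing $G'_Y$ of $E_{G'}\upharpoonright Y$.
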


We use \emph{well-iterated edge slide} in the sense of
\cite[Section~3.C]{MT}. Such slides preserve connectedness and preserve
acyclicity when the original graph is acyclic
\cite[Propositions~3.13(b) and~3.20(b)]{MT}.

The following theorem provides an ergodic hyperfinite subgraph after
edge sliding.

\begin{theorem}[Miller--Tserunyan
{\cite[Theorem~11.1]{MT}}]\label{thm:edge_slide_hyperfinite}
Let $G$ be a locally countable ergodic p.m.p. Borel graph on $(X,\mu)$.
There is a Borel well-iterated edge slide $\widetilde G$ of $G$
containing an ergodic hyperfinite Borel subgraph.
\end{theorem}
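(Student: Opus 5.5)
The plan is to build the ergodic hyperfinite subgraph as an increasing union of finite Borel subrelations $F_0\subseteq F_1\subseteq\cdots$ of $E_G$. At every stage, each $F_n$-class will be spanned by a finite connected subgraph $H_n$ of the current slid graph $G_n$, with $H_n\subseteq H_{n+1}$. Edge slides let us connect classes that are $G$-connected only through other classes. The final graph $\widetilde G$ will be the limit of the $G_n$. The sequence will be arranged so that almost every edge is slid only finitely often, which makes $\widetilde G$ a well-iterated slide. Then $H=\bigcup_n H_n$ is a Borel subgraph of $\widetilde G$, and $E_H=\bigcup_nF_n$ is hyperfinite. The remaining work is to force $E_H$ to be ergodic.

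For ergodicity, fix a countable family $(A_k)$ of Borel sets that is dense in the measure algebra. List with infinite repetition the pairs $(k,\varepsilon)$ with $\varepsilon=2^{-j}$. At stage $n$, handling $(k,\varepsilon)$, we want to enlarge $F_n$ to $F_{n+1}$ so that, on all but measure $\varepsilon$ of $X$, the $F_{n+1}$-class of $x$ meets $A_k$ in proportion within $\varepsilon$ of $\mu(A_k)$. This is the standard ergodic-theorem-type merging argument. Since $E_G$ is ergodic, the quotient of $E_G$ by the finite relation $F_n$ is again ergodic on a transversal. We can therefore merge finitely many $F_n$-classes along finite $E_G$-paths into larger finite classes with approximately correct $A_k$-densities, for instance by choosing an ergodic $T\in[E_G]$ and using long Rokhlin-type orbit segments of the induced map on the quotient. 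If the density conditions hold with summable errors, every $E_H$-invariant set is approximated arbitrarily well by each $A_k$ in the appropriate sense, and hence is null or conull.

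Each merge has to be realized inside the graph. Two $F_n$-classes $C,D$ to be merged are joined by a $G_n$-path. Take a shortest such path and slide its first edge out of $C$ along the path, so that it becomes a single edge from $C$ to $D$. Add that edge to $H_{n+1}$. The path's edges inside intermediate classes are only used as the track of the slide, so they remain in $G_{n+1}$ and connectedness is preserved. Acyclicity is also preserved when $G$ is a forest, by the cited propositions.

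The main obstacle is the well-iteratedness. An edge can be slid at many stages, and its endpoint could wander off to infinity. The first remedy I would try is a measure bound. At stage $n$, the set of edges moved has measure (counted with the cost measure) at most $\delta_n$, with $\sum_n\delta_n<\infty$, and each slid edge moves an endpoint that has not yet been incorporated into $H$. By Borel--Cantelli, almost every edge is then moved only finitely often. A further requirement is to only slide edges not already in $H_n$, so that the $H_n$ are never disturbed. If controlling $\delta_n$ is impossible because the merges require too many new connecting edges, I would instead reserve in advance a small-cost Borel subset of edges to serve as sliding material. I would use only those edges at stage $n$, and argue using local finiteness of the merged classes that only finitely many stages touch a given edge.
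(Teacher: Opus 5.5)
The paper does not prove this statement; it imports it from Miller--Tserunyan, so your sketch has to stand on its own. The ergodicity bookkeeping is fine. In fact one good stage per pair $(k,\varepsilon)$ suffices, since an $E_H$-invariant set is $F_{n+1}$-invariant.

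\textbf{First gap: the limit need not preserve connectedness.} The problem is the step you flag yourself, and your proposed resolution does not close it. Knowing that almost every edge is slid only finitely often does not make the limit a well-iterated slide, because it does not preserve connectedness. For example, take a one-ended p.m.p. treeing of an aperiodic hyperfinite relation, with parent map $p$ and finite heights $h$, so that $h(p(x))>h(x)$. At stage $k$, slide every edge $\{x,p(x)\}$ with $h(x)=k$ along $\{p(x),p^2(x)\}$. Each step is a legitimate simultaneous slide, since the track edges are not moved at that step, and every edge moves exactly once. Yet the limit $\{\{x,p^2(x)\}:x\in X\}$ splits every component into its two level-parity classes.

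Well-iterated slides preserve connectedness (Miller--Tserunyan, Proposition~3.13(b)), so their definition must exclude exactly this, and nothing in your construction does. Your tracks run through cross-class edges of $G_n\setminus H_n$, which are precisely the edges still eligible to be slid later. So the path witnessing that the old endpoints of a slid edge remain connected can be destroyed at a later stage, re-witnessed by a longer path through further unfrozen edges, and so on without end. Your Borel--Cantelli bound controls the wrong quantity. In the scheme as you describe it, a slid edge is put into $H_{n+1}$ immediately and never moved again, so each edge moves at most once anyway. What must be finite almost everywhere is the depth of this recursive re-witnessing. Your appeal to acyclicity preservation likewise presupposes a genuinely well-iterated slide.

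\textbf{Second gap: slides within one stage conflict.} The slides in a single stage are not independent. In a treeing the path between two classes $C'$ and $D'$ is unique. It may pass through $C$ and leave it along the very edge you slide to merge $C$ with $D$. Likewise, in your Rokhlin chains the shortest path from $C_{i-1}$ to $C_i$ may end with the same edge that the shortest path from $C_i$ to $C_{i+1}$ starts with. Then one edge is simultaneously moved and used as track, and the slide is undefined. Sequencing the slides into countably many Borel rounds avoids this only by reintroducing the infinite-iteration problem inside each stage.

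\textbf{What is missing.} You need a mechanism guaranteeing that, for almost every original edge, its endpoints are eventually joined by a path of edges that are never moved again. That mechanism must still leave enough movable edges to realize the merges with the required densities. Your fallback of a reserved small-cost pool of sliding edges, as stated, again addresses only the number of times an edge moves, not this stabilization.
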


We next recall the ergodic hyperfinite decomposition theorem and its
relative form. Following Miller--Tserunyan, a graph $G$
\emph{supergraphs} a relation $E$ if $G\cap E$ graphs $E$. A sequence
$(E_j)$ is \emph{increasingly transverse} if
\[
 E_j\cap\bigvee_{i<j}E_i=\operatorname{id}_X
 \qquad (j\geq1).
\]
A slide based on a relation
$E_0$ fixes the edges in $G\cap E_0$.

\begin{theorem}[Miller--Tserunyan
{\cite[Theorem~6.14]{MT}}]\label{thm:edge_slide_acyclic_graph}
Let $G$ be a graphing of an ergodic p.m.p. CBER $E$. There are a Borel
well-iterated edge slide $\widetilde G$ of $G$ and an increasingly
transverse decomposition
\[
 E=\bigvee_{j=0}^{N}E_j,
 \qquad N\in\N\cup\{\infty\},\quad N\leq C_\mu(G),
\]
such that $\widetilde G$ supergraphs every $E_j$, each $E_j$ with
$0\leq j<N$ is ergodic hyperfinite, and $E_N$ is finite if $N<\infty$.
When $N=\infty$, the join is over all $j\in\N$ and there is no final
finite factor.
\end{theorem}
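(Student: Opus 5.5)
The plan is to build the decomposition by a transfinite-free, $\omega$-length recursion. At each stage we extract one new ergodic hyperfinite piece using Theorem~\ref{thm:edge_slide_hyperfinite}, and we take care that later slides never disturb the edges already committed to earlier pieces. We set $F_{-1}=\operatorname{id}_X$ and $G^{(0)}=G$. Suppose that at stage $j$ we have a well-iterated slide $G^{(j)}$ of $G$ and relations $E_0,\dots,E_{j-1}$, each supergraphed by $G^{(j)}$, with $F_{j-1}=\bigvee_{i<j}E_i$. If $F_{j-1}=E$ we stop, with $N=j$ and $E_N$ trivial. Otherwise we look at the edges $G^{(j)}\setminus F_{j-1}$, which are the ones not yet used.

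The relative step goes as follows. Form the graph on $X$ obtained by regarding each $F_{j-1}$-class as a single vertex. Concretely, consider the graph $G^{(j)}\setminus F_{j-1}$ together with $F_{j-1}$; its joint connectedness relation is $E$, which is ergodic. We then apply a relative version of Theorem~\ref{thm:edge_slide_hyperfinite} based on $F_{j-1}$. This means we slide the unused edges only along paths in $G^{(j)}\cap F_{j-1}$ and along other unused edges, and never move an edge of $G^{(j)}\cap F_{j-1}$. The goal is a subgraph $H_j$ of the new slide $G^{(j+1)}$, made of previously unused edges, such that $E_j:=E_{H_j}$ is ergodic hyperfinite and $E_j\cap F_{j-1}=\operatorname{id}_X$. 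Transversality should come from the fact that $H_j$ consists of edges that do not lie inside $F_{j-1}$-classes. To make this work, we choose $H_j$ acyclic relative to the contraction, so that no $H_j$-path joins two distinct points of one $F_{j-1}$-class. If that fails (for instance, when the unused edges modulo $F_{j-1}$ only generate a non-ergodic or finite relation), we instead let $E_N$ be the finite remainder and stop.

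The bound $N\le C_\mu(G)$ comes from a cost count. Edge slides preserve degrees in measure, so $C_\mu(G^{(j)})=C_\mu(G)$. Each infinite ergodic hyperfinite $E_j$ is graphed by $G^{(\infty)}\cap E_j$, which has cost at least $1$. Moreover, the sets $G^{(\infty)}\cap E_j$ are pairwise disjoint, because the decomposition is increasingly transverse. Summing over $j<N$ therefore gives $N\le C_\mu(G)$. When $N=\infty$, we take $\widetilde G=\lim_j G^{(j)}$, and we must check that $\bigvee_j E_j=E$ rather than just a proper subrelation. For this we plan to interleave the construction with an enumeration of $G$'s edges (or of a generating sequence of $[E]$), so that each prescribed edge is absorbed into some $F_j$ at a finite stage.

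The main obstacle is the relative step together with the convergence of the iterated slides. We need a form of Theorem~\ref{thm:edge_slide_hyperfinite} that works "modulo $F_{j-1}$", leaving every edge of $G\cap F_{j-1}$ fixed. We also need the infinite composition of slides to be well-iterated, meaning that almost every edge moves only finitely often. For the first point, we would try to rerun the Miller--Tserunyan proof on the contracted graph, using ergodicity of $E$ to find ergodic hyperfinite subgraphs among the unused edges. For the second point, we would arrange that at stage $j$ only edges not yet in $\bigcup_{i<j}H_i$ are moved. We would also use the measure-decay of the set of edges still unused together with a Borel–Cantelli argument, so that almost every edge is eventually frozen.
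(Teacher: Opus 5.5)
The paper gives no proof of this statement. It imports it from Miller--Tserunyan, together with the relative version, Theorem~\ref{thm:relative-ergodic-decomposition}, and your proposal has a genuine gap. That relative theorem is exactly what your recursion needs at every stage. Once Theorem~\ref{thm:edge_slide_hyperfinite} has produced $E_0$, one application of it with base $E_0$ yields the whole decomposition, up to composing the two slides. So the step you call ``the main obstacle'' is the entire content of the result, and it does not follow from Theorem~\ref{thm:edge_slide_hyperfinite}:
\begin{itemize}
\item Applied to $G^{(j)}$, that theorem may slide edges of $G^{(j)}\cap F_{j-1}$, so earlier factors stop being supergraphed. Its ergodic hyperfinite subgraph may also lie inside or meet $F_{j-1}$.
\item Applied to the unused edges alone, it is unavailable, because they need not generate an ergodic relation. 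Ergodicity has to be manufactured by sliding them along $F_{j-1}$, which is a genuinely relative statement.
\item ``Rerunning it on the contracted graph'' has no meaning. $F_{j-1}\supseteq E_0$ is ergodic and aperiodic, so $X/F_{j-1}$ is not standard Borel and every $F_{j-1}$-invariant set is null or conull. There is no probability space on which to run the argument.
\end{itemize}
Your cost count is fine if you use $C_\mu(\widetilde G)\leq C_\mu(G)$ rather than equality, since loops and duplicate edges can be discarded in the limit.

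Your stopping rule also fails. Take $E=E_T*E_S$, treed by $G=\mathcal T\sqcup\mathcal S$, where $T$ is ergodic and $S$ is an aperiodic automorphism of a set $A$ with $\mu(A)=1/2$. For instance, use one generator of a free Bernoulli $\F_2$-action and the first-return map of the other generator to a cylinder of measure $1/2$.
\begin{itemize}
\item Here $C_\mu(G)=3/2$, so the theorem forces $N=1$.
\item After $E_0$ is extracted, your relative step must fail. A second ergodic hyperfinite factor transverse to $E_0$ would give $C_\mu(\widetilde G)\geq 2$ by your own count.
\item If Theorem~\ref{thm:edge_slide_hyperfinite} simply returns $\mathcal T$, the unused edges generate $E_S$, which is not finite. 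So there is no ``finite remainder'' to take.
\item Since $[E:E_0]=\infty$ here, the index does not tell you when to stop either.
\end{itemize}
What is missing is a second $F_{j-1}$-based sliding argument that pushes the leftover edges along $F_{j-1}$ until they generate a finite relation transverse to $F_{j-1}$.

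Finally, the case $N=\infty$ is unsupported. There $C_\mu(G)=\infty$, and nothing in your construction makes the measure of edges moved at stage $j$ summable in $j$, so Borel--Cantelli has nothing to act on. Likewise, ``interleaving with an enumeration of $G$'s edges'' gives no way to force a prescribed, possibly moving, edge into some $F_j$ without destroying hyperfiniteness or transversality of $E_j$.
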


For the proof of our main theorem, the essential point is that an
already chosen ergodic base can remain fixed.

\begin{theorem}[Miller--Tserunyan
{\cite[Theorem~6.13]{MT}}]\label{thm:relative-ergodic-decomposition}
Let $E_0\subseteq E$ be ergodic p.m.p. CBERs, and let $G$ be a graphing
of $E$ that supergraphs $E_0$. Then $G$ admits a Borel $E_0$-based
well-iterated edge slide $\widetilde G$ and an increasingly transverse
decomposition
\[
 E=\bigvee_{j=0}^{N}E_j,
 \qquad N\in\N\cup\{\infty\},\quad N\leq C_\mu(G),
\]
with the prescribed first factor $E_0$, where $\widetilde G$ supergraphs
every $E_j$, the factors $E_j$ for $1\leq j<N$ are ergodic hyperfinite,
and $E_N$ is finite if $N<\infty$.
\end{theorem}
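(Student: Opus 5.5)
The plan is an induction that peels off one ergodic hyperfinite factor at a time, keeping everything built so far fixed. At stage $j\geq1$ we will have an $E_0$-based well-iterated edge slide $G_j$ of $G$ and an ergodic relation $F_j=\bigvee_{i<j}E_i$ with $G_j\cap F_j$ graphing $F_j$ and $G_j$ supergraphing each $E_i$, $i<j$. Put $H_j=G_j\setminus F_j$; this is the set of edges not yet used. Each step consists only of slides based on $F_j$, that is, slides that move edges of $H_j$ along paths in $G_j\cap F_j$. Such slides never alter the edges already supergraphing $E_0,\dots,E_{j-1}$, and in particular the whole construction is $E_0$-based, as required.

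The key step is a relative version of Theorem~\ref{thm:edge_slide_hyperfinite}. Given $F_j$ ergodic and $H_j$ carrying enough edge measure to generate new connections, we want an $F_j$-based slide $\widetilde H_j$ of $H_j$ containing a subgraph generating an ergodic hyperfinite $E_j$ with $E_j\cap F_j=\operatorname{id}_X$. The mechanism is as follows. Because $F_j$ is ergodic, an endpoint of an $H_j$-edge can be slid along an $F_j$-path to any point of its $F_j$-class, and by ergodicity any two positive-measure sets are linked. We will follow the Miller--Tserunyan proof of Theorem~\ref{thm:edge_slide_hyperfinite}, but work in the "quotient" picture in which $F_j$-classes are contracted. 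First we select a Borel family of $H_j$-edges and slide their endpoints so that they assemble into the graph of a partial automorphism $T$. Next we arrange that $T$ joins points in different $F_j$-classes along any $T$-orbit; this makes $E_T\cap F_j$ trivial, and in fact gives the free position needed for increasing transversality. Finally we use the odometer/cutting-and-stacking trick: at each scale, slide further edges to glue towers so that the resulting $E_T$ is ergodic hyperfinite. The successive slides must move each edge only finitely often off a set of small measure. This makes the limit slide well-iterated, and the limit preserves connectedness and acyclicity by the cited propositions.

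Cost accounting and termination go as follows. Any graphing of an ergodic infinite (aperiodic) relation has cost at least $1$. Since slides preserve cost, each new ergodic factor therefore consumes at least one unit of edge measure from $H_j$. This gives $N\leq C_\mu(G)$, and in particular $N<\infty$ when $C_\mu(G)<\infty$. If $H_j$ can no longer produce an ergodic aperiodic piece, we stop. We then slide the remaining edges so that they generate a relation $E_N$ transverse to $F_N$ whose classes are finite, and in this case $E=F_N\vee E_N$ holds because slides preserve connectedness. If the process never stops, we take $N=\infty$. In that case we must check that $\bigvee_jE_j=E$; we will ensure this by scheduling a countable dense enumeration of the original edges, so that each is eventually absorbed into some $F_j$.

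The main obstacle is the relative key step. We must produce ergodicity of $E_j$ and transversality to $F_j$ simultaneously, using only $F_j$-based slides. A further difficulty is the dichotomy at the end: we must prove that leftover edges which cannot support an ergodic factor can be slid to generate a finite relation, rather than some non-ergodic aperiodic remainder. For the latter, our first approach will be the ergodic decomposition of the leftover relation. On any positive-measure invariant piece where it is aperiodic, the ergodicity of $F_j$ lets us spread the edges across all of $X$ by sliding. This yields an ergodic hyperfinite factor after all, contradicting the stopping condition.
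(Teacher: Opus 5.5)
The paper does not prove this statement; it imports it from \cite[Theorem~6.13]{MT}. Read as a standalone proof, your proposal has a genuine gap at its center. The relative key step is essentially the whole content of the theorem, and it is asserted rather than proved: producing, by $F_j$-based slides, an ergodic hyperfinite $E_j$ with $E_j\cap F_j=\operatorname{id}_X$ that the slid graph supergraphs. Your suggested route, running the proof of Theorem~\ref{thm:edge_slide_hyperfinite} with $F_j$-classes contracted, is not available. $F_j$ is ergodic on a nonatomic space, hence not smooth, so $X/F_j$ is not standard Borel and the pushforward of $\mu$ is $\{0,1\}$-valued. There is no p.m.p.\ contracted graph to apply anything to.

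The sketched mechanism also does not give transversality. As stated, arranging that $T$ joins points in different $F_j$-classes does not exclude $x\mathrel{F_j}T^kx$ for $k\geq2$. You need every $T$-orbit to meet each $F_j$-class at most once. Maintaining this through a cutting-and-stacking limit, where each gluing creates longer orbits, is exactly the hard part. Free position is neither implied nor needed. It also cannot hold in general, since freeness at every stage would make $E$ treeable whenever $E_0$ is.

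Your termination dichotomy tracks the wrong quantity. What matters is the index of $F_j$ in $E$, i.e.\ the number of $F_j$-classes in an $E$-class, which is a.e.\ constant by ergodicity of $E$. If it equals $k<\infty$, every $E'\subseteq E$ with $E'\cap F_j=\operatorname{id}_X$ has classes of size at most $k$. So no aperiodic factor can be added, however aperiodic the leftover edges are. For example, take an irrational rotation $R$, $E=E_R$, $E_0=E_{R^2}$, and let $G$ be the union of the undirected graphs of $R$ and $R^2$. The leftover edges generate the ergodic relation $E_R$, yet $[E_R:E_{R^2}]=2$. Your argument via the ergodic decomposition of the leftover relation would therefore produce a factor that cannot exist.

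At the finite-index stage, the statement also does not require sliding all remaining edges into a finite relation; $\widetilde G$ may keep edges lying in no factor. What it requires is a finite $E_N$ with $E_N\cap F_N=\operatorname{id}_X$, $F_N\vee E_N=E$, and $\widetilde G\cap E_N$ graphing $E_N$. You do not construct this.

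When $N=\infty$, the cost bound no longer forces termination. Your bookkeeping for $\bigvee_jE_j=E$ then needs a stronger key step that forces prescribed edges into $F_{j+1}$. It also needs control making the infinite composition of slides well-iterated. Neither is formulated. The cost count itself is fine: it uses that the sets $\widetilde G\cap E_j$ are pairwise disjoint by transversality.
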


If $G$ is a treeing in either theorem, then $\widetilde G$ is a treeing
and the decomposition is free. Indeed, the graphs
$\widetilde G\cap E_j$ are pairwise disjoint subgraphs of a common
treeing, and each graphs its corresponding factor. Thus no reduced
cycle can alternate between the factors.

For an aperiodic automorphism $T$, we write
\[
 \mathcal T
 :=\{(x,T(x)):x\in X\}\cup\{(T(x),x):x\in X\}
\]
for its undirected graph, so that $E_{\mathcal T}=E_T$. The same
convention applies to first-return transformations.

\begin{corollary}\label{cor:s3-relative-generator}
Let $E$ be a p.m.p. CBER on $(X,\mu)$, and let $T\in[E]$ be ergodic.
If $E$ admits a treeing containing $\mathcal T$ and $C_\mu(E)>2$, then
there is an ergodic $S\in[E]$ such that $E_T\perp E_S$.
\end{corollary}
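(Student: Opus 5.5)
Apply the relative Miller--Tserunyan decomposition, Theorem~\ref{thm:relative-ergodic-decomposition}, with the prescribed ergodic base $E_0:=E_T$. Let $G$ be a treeing of $E$ containing $\mathcal T$. Since $\mathcal T\subseteq G$ graphs $E_T$, the graph $G\cap E_T$ graphs $E_T$, so $G$ supergraphs $E_0$. The theorem then gives an $E_T$-based well-iterated edge slide $\widetilde G$ together with an increasingly transverse decomposition
\[
 E=\bigvee_{j=0}^N E_j,\qquad E_0=E_T,
\]
where $\widetilde G$ supergraphs every $E_j$. Since $G$ is a treeing, so is $\widetilde G$, and by the remark following Theorem~\ref{thm:relative-ergodic-decomposition} the decomposition is free:
\[
 E=E_0*E_1*\cdots .
\]
Each $E_j$ is graphed by a subgraph of the common treeing $\widetilde G$. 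By Gaboriau's theorem that treeings attain cost, the cost is additive over the pieces:
\[
 C_\mu(E)=\sum_j C_\mu(\widetilde G\cap E_j)=\sum_j C_\mu(E_j).
\]

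We have $C_\mu(E_0)=1$, since $T$ is aperiodic, being ergodic on a nonatomic space. An ergodic hyperfinite relation has cost $1$, and a finite factor has cost strictly less than $1$. Hence $C_\mu(E)>2$ forces at least two factors besides $E_0$ (so $N\geq 2$), and in particular $E_1$ is an ergodic hyperfinite factor. If $N=1$, the total cost would be at most $1+C_\mu(E_1)\leq 2$, contradicting the hypothesis. The remaining case $N=\infty$ is fine, since every $E_j$ with $j\geq 1$ is then ergodic hyperfinite.

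Since $E_1$ is ergodic hyperfinite, it is generated by a single Borel automorphism $S$ by \cite{KM}. This $S$ lies in $[E_1]\subseteq[E]$, and it is ergodic because $E_S=E_1$ is ergodic. Freeness of the decomposition yields $E_0\perp E_1$: a reduced alternating cycle between $E_T$ and $E_S$ would be a reduced cycle alternating between the factors $E_0$ and $E_1$ of a free product, which is impossible. Thus $E_T\perp E_S$, as required.

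The main point needing care is the cost bookkeeping. We must check that $C_\mu(\widetilde G)=C_\mu(E)$. This holds because edge slides preserve acyclicity and connectedness, so $\widetilde G$ is again a treeing of $E$. We must also check that each $\widetilde G\cap E_j$ is a treeing of $E_j$, so that its cost equals $C_\mu(E_j)$. Finally, the pieces $\widetilde G\cap E_j$ must exhaust $\widetilde G$. This follows because their union is a subgraph of $\widetilde G$ that generates $E$, and a proper connected-equivalent subgraph of a tree cannot exist. Once these points are in place, the inequality $C_\mu(E)>2$ cleanly rules out $N\leq 1$.
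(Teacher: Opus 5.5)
Your proposal is correct and follows the paper's proof: you apply the relative Miller--Tserunyan decomposition with base $E_0=E_T$, use the cost hypothesis to rule out the case where the only factor besides $E_T$ is finite, and take $S$ generating the resulting ergodic hyperfinite factor $E_1$, with perpendicularity coming from freeness of the decomposition. Your exact cost additivity via treeings is valid but more than the paper needs, since it only uses the subadditive bound $C_\mu(E)\leq 1+C_\mu(F)<2$; you should also say explicitly that $E$ is ergodic (it contains $E_T$), because the relative theorem requires this.
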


\begin{proof}
The relation $E$ is ergodic because it contains $E_T$. Apply
Theorem~\ref{thm:relative-ergodic-decomposition} with prescribed base
$E_0=E_T$. The decomposition is free and preserves this base. There
must be an additional ergodic hyperfinite factor: otherwise
$E=E_T*F$ for a finite relation $F$, and
\[
 C_\mu(E)\leq 1+C_\mu(F)
 =2-\int_X\frac{1}{|[x]_F|}\,d\mu(x)<2,
\]
contrary to the hypothesis. Here we use the standard cost formula for
finite relations \cite[Proposition~III.3]{GaboriauCost}. An automorphism
$S$ generating an additional ergodic hyperfinite factor is ergodic and
satisfies $E_T\perp E_S$.
\end{proof}

\section{Treeable ergodic equivalence relations are richly ergodic}
\label{sec:treeable-richly-ergodic}

After treating the hyperfinite case, we prove
Theorem~\ref{thm:treeable-rich} by constructing a strongly increasing
exhaustion.

\begin{theorem}
\label{thm:s4-treeable-richly-ergodic}
Let $E$ be an ergodic treeable p.m.p. CBER on $(X,\mu)$. Then $E$
is richly ergodic.
\end{theorem}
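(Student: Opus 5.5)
If $E$ is hyperfinite, Proposition~\ref{prop:hyperfinite-rich} applies directly, so assume $E$ is not hyperfinite. Then every treeing has cost $C_\mu(E)>1$: an ergodic treeable relation of cost $1$ is hyperfinite. The goal is to build an increasing exhaustion $E=\bigcup_n E_n$ that is strongly increasing and then invoke Proposition~\ref{prop:strongly-increasing}. Fix a treeing $G$ of $E$. By Theorem~\ref{thm:edge_slide_hyperfinite}, after a well-iterated edge slide (which keeps $G$ a treeing) we may assume $G$ contains the graph $\mathcal T_0$ of an ergodic automorphism $T_0\in[E]$. Enumerate $G\setminus\mathcal T_0=\bigcup_k G_k$ as a countable union of Borel edge sets, for instance via a Lusin--Novikov decomposition into graphs of partial injections.

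\textbf{Inductive construction.} We build treeings $G^{(n)}$ of $E$, each obtained from the previous one by an edge slide based on the part already built, together with subgraphs $H_n\subseteq G^{(n)}$, such that:
\begin{itemize}
\item[(a)] $H_n$ contains $\mathcal T_0$ together with, up to sliding, the edges $G_0,\dots,G_{n-1}$ except on a small-measure set of endpoints;
\item[(b)] $E_n:=E_{H_n}$ is ergodic;
\item[(c)] the unused edges $G^{(n)}\setminus H_n$ carry positive cost.
\end{itemize}
The exhaustion requirement is that every edge of $G$ eventually lies in some $E_n$, with exceptional sets of measure summing to finitely much.

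The reserve in (c) is what makes the exhaustion strongly increasing. Because the decomposition is free, $E=E_n * E'$ where $E'$ is generated by the unused edges, and $E'$ has positive cost. Restrict to a small set $S$ of measure $\epsilon$. By Corollary~\ref{cor:zoom_in_cost}, or rather by its analogue for the reserved part (computed via Theorem~\ref{thm:cost-compression} applied to the free factor), $E\upharpoonright S$ has a treeing in which the induced reserved factor alone has normalized cost $>2$. Apply Theorem~\ref{thm:edge_slide_hyperfinite} to that reserved factor on $S$, noting that it is ergodic after compressing. Then apply Corollary~\ref{cor:s3-relative-generator}, or directly Theorem~\ref{thm:relative-ergodic-decomposition} with base $(E_n\upharpoonright S)$ replaced by a first-return ergodic automorphism. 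This yields an ergodic automorphism $S_0$ of $S$ whose relation is perpendicular to $E_n\upharpoonright S$.

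Finally, spread $S_0$ to an ergodic $T\in[E]$ on all of $X$ with $E_n\perp E_T$. Pick $\phi\in[E]$-partial maps conjugating $S$ onto pieces of a partition, using edges of the reserve so that no alternating cycle with $E_n$ is created. Equivalently, let $T$ act as $S_0$ on $S$ and route the complement through reserved edges in a tree-like manner. Freeness of $E_n * E'$ guarantees that any reduced $E_n$--$E_T$ cycle would yield a cycle in the treeing.

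\textbf{Keeping the base ergodic.} At stage $n+1$, apply Theorem~\ref{thm:relative-ergodic-decomposition} with base $E_n$ to the treeing $G^{(n)}$. The slide fixes $H_n$ and expresses $E$ as a free product $E_n * E^1 * E^2 * \cdots$ with ergodic hyperfinite factors, plus possibly a finite one. Take $H_{n+1}$ to consist of $H_n$, one or more of the ergodic factors $E^j$, and the slid copies of $G_n$, but withhold a small positive-cost part (for example, the restriction of one ergodic factor to a set of small measure) as the new reserve. Adjoining an ergodic factor keeps $E_{n+1}$ ergodic.

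\textbf{Main obstacle.} The delicate step is ensuring simultaneously that $\bigcup_n E_n=E$ and that a positive-cost reserve persists at every stage. Since slides move edges, the original $G_k$ must be tracked. I would handle this by reserving at stage $n$ only edges with endpoints in a set of measure $<2^{-n}$, and requiring that slides based on $E_n$ keep the already-placed edges fixed. A Borel--Cantelli argument then shows that every edge of $G$ lies in some $E_n$ almost surely.

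The finite-cost case is exactly where the compression step is needed. The reserve has small total cost, but after zooming into a set of measure comparable to that cost, its normalized cost exceeds $2$, which is what Corollary~\ref{cor:s3-relative-generator} requires.
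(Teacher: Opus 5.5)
\textbf{There is a genuine gap: the step ``spread $S_0$ to an ergodic $T\in[E]$ on all of $X$ with $E_n\perp E_T$'' cannot work as described, and it is the heart of the theorem.}

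The reserve relation $E'$ has singleton classes off the set of endpoints of unused edges. You take that set to have measure $<2^{-n}$, so partial maps with graph in $E'$ cannot carry $S$ onto a partition of $X$. Any spreading must therefore pass through $E_n$, and that destroys perpendicularity. Suppose $\phi$ has graph in $E_n$ with $\phi(S)\cap S=\varnothing$, and $E_T$ contains both $(s,S_0s)$ (because $T$ acts as $S_0$ on $S$) and its conjugated copy $(\phi(s),\phi(S_0s))$. Then
$\phi(s)\mathrel{E_n}s\mathrel{E_T}S_0s\mathrel{E_n}\phi(S_0s)\mathrel{E_T}\phi(s)$
is a reduced alternating cycle. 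If instead $E_T$ joins $\phi(s)$ to $s$, then already $E_T\cap E_n\neq\operatorname{id}_X$.

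An unspecified ``tree-like routing'' through reserve edges is not rescued by freeness of $E_n*E'$ either. Whenever $T$ sends two distinct points $x\mathrel{E_n}x'$ into a common $E_n$-class (for example through the same reserve edge), $x\mathrel{E_n}x'\mathrel{E_T}T(x')\mathrel{E_n}T(x)\mathrel{E_T}x$ is a reduced alternating $4$-cycle, even though the treeing has no cycle. So your closing assertion, that any reduced $E_n$--$E_T$ cycle would give a cycle in the treeing, is unsupported.

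A smaller slip: the reserved factor alone need not be ergodic after compressing; it can even be finite. Ergodicity on $S$ must come from the first-return map of $T_0$, as in the paper's $G'=\mathcal T\sqcup R_n$ with $\mathcal T_Y\subseteq G'_Y$. With that repair, your construction of $S_0$ on the small set is essentially the paper's $S_Y$.

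\textbf{The missing idea is cost amplification, not globalization.} The paper never extends an automorphism from the small set.

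First, it keeps the exhaustion rigid: $E_n=E_{\mathcal T\sqcup H_n}$ with $H_n=H\cap(X_n\times X_n)$, for a partition $X=\bigsqcup A_n$ with $C_\mu(H\cap(A_n\times A_n))>0$. There are no re-slides, no edge tracking and no Borel--Cantelli argument, so the obstacle you flagged as the main one is not where the difficulty lies.

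Second, it asks only for $S$ perpendicular to the base $E_T$, inside a relation perpendicular to $E_{H_n}$. Compression is used to manufacture free generators. From $S_Y$ on $Y\subseteq A_n$ with $E_{T_Y}\perp E_{S_Y}$, the conjugates $U_k=S_Y^kT_YS_Y^{-k}$ ($k\geq1$) together with $T_Y$ freely generate $\F_\infty$. Hence $\mathcal T\sqcup H_Y$ is a treeing of a relation $F_n\subseteq E_{\mathcal T\sqcup R_n}$ with $C_\mu(F_n)=\infty$ and $F_n\perp E_{H_n}$.

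Corollary~\ref{cor:s3-relative-generator}, applied to $F_n$ on all of $X$ with base $E_T$, then gives an ergodic $S\in[F_n]$ with $E_T\perp E_S$. By associativity, $E_n\vee E_S=E_{H_n}*E_T*E_S=E_n*E_S$. Your proof needs this device, or some substitute that produces an ergodic automorphism of the whole space free from $E_n$.
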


\begin{proof}
Let $G$ be a Borel treeing of $E$. By
Theorem~\ref{thm:edge_slide_hyperfinite}, after replacing $G$
by a well-iterated edge slide, we may assume that $G$ contains
an ergodic hyperfinite Borel subgraph. Since these slides
preserve connectedness and acyclicity, $G$ remains a treeing
of $E$. Replacing this subgraph by the undirected graph of an
ergodic automorphism generating the same relation also
preserves the treeing, since its free-product position
relative to the remaining edges is unchanged. Thus, keeping
the notation $G$ for the resulting treeing, we may assume
without loss of generality that
\[
G=\mathcal T\sqcup H,
\]
where $T\in[E]$ is ergodic. Since treeings attain cost,
\[
C_\mu(E)=C_\mu(G)=1+C_\mu(H).
\]
If $C_\mu(H)=0$, then $E=E_T$ modulo null sets. In this case $E$ is
hyperfinite and is richly ergodic by Proposition~\ref{prop:hyperfinite-rich}. Thus we may assume
that $C_\mu(H)>0$.

By Proposition~\ref{prop:strongly-increasing}, it suffices to construct a strongly increasing
exhaustion of $E$. Choose a Borel partition
\[
X=\bigsqcup_{n\in\N}A_n
\]
such that, for every $n\in\N$,
\[
\mu(A_n)>0,
\qquad C_\mu\bigl(H\cap(A_n\times A_n)\bigr)>0.
\]
To see that such a partition exists, write $H$ as a countable union of
graphs of partial Borel isomorphisms. Since $C_\mu(H)>0$, one of these
isomorphisms, say $\varphi:D\to\varphi(D)$, has a domain of positive
measure. To make its domain and range disjoint, choose a member $B$ of a
countable Borel family, closed under complements and separating points,
such that
$D\cap B\cap\varphi^{-1}(X\setminus B)$ has positive measure, and
restrict $\varphi$ to this set. Thus we may assume
$D\cap\varphi(D)=\varnothing$.
Partition $D$ into Borel sets $D_n$, $n\in\N$, of positive
measure. The sets $D_n\cup\varphi(D_n)$ are pairwise disjoint; extend
them to a partition $(A_n)_{n\in\N}$ of $X$ by adding the
remaining points to $A_0$. The edges of $\varphi\upharpoonright D_n$
then witness the required positive cost inside $A_n$.

For $n\in\N$, put
\[
\begin{aligned}
X_n&:=\bigsqcup_{m<n}A_m,
&\qquad H_n&:=H\cap(X_n\times X_n),\\
G_n&:=\mathcal{T}\sqcup H_n,
&\qquad E_n&:=E_{G_n}.
\end{aligned}
\]
Then $G_n\uparrow G$. Every finite $G$-path lies in some $G_n$, so
\[
E_0\subseteq E_1\subseteq\cdots,
\qquad \bigcup_{n\in\N}E_n=E.
\]
Fix $n$. We will find an ergodic $S\in[E]$ such that $E_n\perp E_S$.
Let
\[
R_n:=H\cap(A_n\times A_n),
\qquad G':=\mathcal{T}\sqcup R_n.
\]
Then $R_n\cap H_n=\varnothing$ and $C_\mu(R_n)>0$. Moreover, $G'$
is a treeing of an ergodic subequivalence relation of $E$, and
\[
C_\mu(E_{G'})=C_\mu(G')=1+C_\mu(R_n)>1.
\]
By Corollary~\ref{cor:zoom_in_cost}, we may choose a Borel set $Y\subseteq A_n$ of sufficiently small positive
measure so that
\[
C_{\mu_Y}(E_{G'}\upharpoonright Y)
=1+\frac{C_\mu(E_{G'})-1}{\mu(Y)}>2,
\]
where $\mu_Y(B)=\mu(B)/\mu(Y)$ for Borel $B\subseteq Y$. Here $Y$
is a complete section modulo null sets, since $T$ is ergodic.

We work on $(Y,\mu_Y)$ and consider the first-return transformation
\[
r_Y(y):=\min\{k\geq 1:T^k(y)\in Y\},
\qquad T_Y(y):=T^{r_Y(y)}(y).
\]
The transformation $T_Y$ is p.m.p. and ergodic. To apply
Corollary~\ref{cor:s3-relative-generator}, we check that
$E_{G'}\upharpoonright Y$ admits a treeing containing $\mathcal{T}_Y$.
After discarding an $E$-invariant null set, every $T$-orbit visits
$Y$ infinitely often in both directions, so the intervals
\[
I_y:=\{T^j(y):0\leq j<r_Y(y)\},\qquad y\in Y,
\]
partition $X$. Let $p:X\to Y$ send each $I_y$ to $y$. Contract these
intervals in $G'$ and discard loops; explicitly, the resulting graph is
\[
G'_Y:=\bigl\{(p(x),p(x')):(x,x')\in G',\ p(x)\neq p(x')\bigr\}.
\]
The map $p$ is Borel and finite-to-one, so $G'_Y$ is a locally countable
Borel graph. Contracting disjoint connected subtrees cannot create a
cycle. Also, $p(x)\mathrel{E_{G'}}x$, and paths project to walks after
consecutive repetitions are removed. Thus $G'_Y$ is a treeing of
$E_{G'}\upharpoonright Y$.
The edge from $T^{r_Y(y)-1}(y)$ to $T_Y(y)$ becomes the edge from $y$
to $T_Y(y)$, so $\mathcal{T}_Y\subseteq G'_Y$.
Corollary~\ref{cor:s3-relative-generator} therefore gives an ergodic
$S_Y\in[E_{G'}\upharpoonright Y]$ such that
\[
E_{T_Y}\perp E_{S_Y}.
\]

For $k\geq 1$, put
\[
U_k:=S_Y^kT_YS_Y^{-k},
\]
and define the Borel graph
\[
H_Y:=\bigcup_{k\geq 1}
\left(\{(y,U_k(y)):y\in Y\}
\cup\{(U_k(y),y):y\in Y\}\right).
\]
Returning to $X$, put
\[
\begin{split}
F_n&:=E_{\mathcal{T}\sqcup H_Y},\\
G'_n&:=\mathcal{T}\sqcup H_n\sqcup H_Y=G_n\sqcup H_Y,
\qquad E'_n:=E_{G'_n}.
\end{split}
\]

\begin{claim}
\label{clm:s4-conjugate-amplification}
The graph $\mathcal{T}\sqcup H_Y$ is acyclic,
$C_\mu(F_n)=\infty$, and $E_{H_n}\perp F_n$.
Consequently, $G'_n$ is a treeing of $E'_n$ and
$C_\mu(E'_n)=\infty$.
\end{claim}

\begin{proof}[Proof of the claim]
Both $T_Y$ and $S_Y$ are aperiodic. Together with
$E_{T_Y}\perp E_{S_Y}$, this implies that they generate a free
$\F_2$-action on a conull set. Set $U_0:=T_Y$ for this paragraph.
By a standard fact in group theory, the family $(U_k)_{k\geq 0}$ freely generates a subgroup isomorphic to $\F_\infty$. Consequently, $\mathcal{T}_Y\sqcup H_Y$ is acyclic, and the
undirected graphs of the $U_k$, $k\geq 0$, are pairwise edge-disjoint.

Replace every $\mathcal{T}_Y$-edge joining $y$ to $T_Y(y)$ by the
corresponding $\mathcal{T}$-path
\[
y,\ T(y),\ \ldots,\ T_Y(y).
\]
These paths have pairwise disjoint interiors in $X\setminus Y$ and
cover the $T$-orbits modulo null sets. Thus this replacement only
subdivides edges of $\mathcal{T}_Y\sqcup H_Y$, and the resulting graph
is $\mathcal{T}\sqcup H_Y$. In particular, it is acyclic.

Each $U_k$ is aperiodic on $Y$, so its undirected graph has cost
$\mu(Y)$ with respect to the unnormalized measure $\mu\upharpoonright Y$.
Since these graphs are edge-disjoint and treeings attain cost, we obtain
\[
C_\mu\bigl(E_{\mathcal{T}\sqcup H_Y}\bigr)
=C_\mu(\mathcal{T}\sqcup H_Y)
=1+\sum_{k\geq 1}\mu(Y)
=\infty.
\]

Every $H_Y$-edge belongs to $E_{G'}$, hence $F_n\subseteq E_{G'}$.
The graphs $H_n$ and $G'$ are disjoint subgraphs of the treeing $G$,
so their generated relations are perpendicular. It follows that
\[
E_{H_n}\perp E_{G'}
\qquad\text{and consequently}\qquad
E_{H_n}\perp F_n.
\]

The graph $H_n$ is acyclic, so its perpendicularity to $F_n$ implies
that $G'_n$ is a treeing of $E'_n$. Since $G'_n$ contains
$\mathcal{T}\sqcup H_Y$ and treeings attain cost,
\[
C_\mu(E'_n)=C_\mu(G'_n)=\infty.
\]
\end{proof}

In particular,
\[
E_n\subseteq E'_n\subseteq E.
\]

Finally, apply Corollary~\ref{cor:s3-relative-generator} to $F_n$,
whose treeing $\mathcal{T}\sqcup H_Y$ contains $\mathcal{T}$ and
has infinite cost. This gives an ergodic $S\in[F_n]$ satisfying
$E_T\perp E_S$. Since $E_{H_n}\perp F_n$ and
$E_T\vee E_S\subseteq F_n$, associativity of free products gives
\[
\begin{split}
E_n\vee E_S
&=E_{H_n}*(E_T*E_S)\\
&=(E_{H_n}*E_T)*E_S
=E_n*E_S.
\end{split}
\]
Thus $E_n\perp E_S$, and $S\in[E]$ because $F_n\subseteq E$.
As $n$ was arbitrary, $(E_n)_{n\in\N}$ is strongly increasing.
Proposition~\ref{prop:strongly-increasing} now implies that $E$ is richly ergodic.
\end{proof}

\section{Groups with richly ergodic and non-richly ergodic actions}

Throughout this section, fix $2\leq r<\infty$ and write
$\F_r=\langle s_1,\ldots,s_r\rangle$. We prove
Theorem~\ref{thm:rich_and_non_rich}.

\subsection{A non-rich Bernoulli action}
The following example follows immediately from the
non-approximability theorem of Gaboriau--Tucker-Drob
\cite[Theorem~1.1]{GTD}.
\begin{proposition}\label{prop:non-rich-action} 
The Bernoulli action of $\Z\times\F_r$ with a nontrivial standard
probability base is free and ergodic, and its orbit relation is not
richly ergodic.
\end{proposition}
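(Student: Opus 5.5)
The plan is to verify the three hypotheses of the Gaboriau--Tucker-Drob theorem \cite[Theorem~1.1]{GTD} for the Bernoulli action $\Z\times\F_r\curvearrowright(Y^{\Z\times\F_r},\nu^{\otimes(\Z\times\F_r)})$, where $(Y,\nu)$ is a nontrivial standard probability space. We then conclude using Proposition~\ref{prop:nonapprox-not-rich}. Set $H=\F_r$ and $K=\Z$; both are infinite and finitely generated, as that theorem requires.

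First, freeness and ergodicity. If $(Y,\nu)$ is nonatomic, or more generally for any nontrivial base, the Bernoulli shift of an infinite countable group is mixing, hence ergodic. For $g\neq e$ in the group, the fixed-point set of $g$ is null. To see this, pick infinitely many disjoint pairs of coordinates $\{h_i,gh_i\}$, which is possible since the group is infinite. The event $x(h_i)=x(gh_i)$ has probability $p=\sum_a\nu(\{a\})^2<1$ (or $0$ in the nonatomic case). These events are independent, so their intersection is null. Hence the action is free. The ambient space is nonatomic because the group is infinite and the base is nontrivial.

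Second, the two restriction hypotheses. The restriction to $K=\Z\times\{e\}$ is again a Bernoulli-type action: it is isomorphic to the $\Z$-Bernoulli shift with base $(Y^{\F_r},\nu^{\otimes\F_r})$, since $\Z$ acts freely on the index set $\Z\times\F_r$ with $|\F_r|$ orbits. A generalized Bernoulli shift over an infinite group acting freely on the index set is mixing, so the $\Z$-restriction is ergodic. Similarly, the restriction to $H=\F_r$ is a Bernoulli shift of $\F_r$ with base $Y^{\Z}$. Because $\F_r$ is nonamenable, its Bernoulli shifts have spectral gap, meaning the Koopman representation on $L^2_0$ is a multiple of the left regular representation, which has no almost-invariant vectors. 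Therefore the $\F_r$-restriction is strongly ergodic.

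With these hypotheses in place, \cite[Theorem~1.1]{GTD} shows that the orbit relation is non-approximable, and Proposition~\ref{prop:nonapprox-not-rich} shows that it is not richly ergodic. The only nontrivial input is strong ergodicity of the $\F_r$-restriction. I expect this to be the main point to cite correctly, namely that the Koopman representation of a (generalized) Bernoulli shift with free index action is contained in a multiple of $\lambda_{\F_r}$, combined with nonamenability. The remaining steps are routine.
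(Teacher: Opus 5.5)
Your proposal is correct and follows the paper's route. You establish freeness and ergodicity, observe that the $\F_r$-restriction is a Bernoulli shift of a nonamenable group (hence strongly ergodic) and the $\Z$-restriction is a Bernoulli shift (hence ergodic), invoke \cite[Theorem~1.1]{GTD} for non-approximability, and conclude with Proposition~\ref{prop:nonapprox-not-rich}; the only difference is that you spell out routine details the paper takes as standard, namely freeness via independent disjoint coordinate pairs and spectral gap via the Koopman representation on $L^2_0$ being a multiple of $\lambda_{\F_r}$.
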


\begin{proof}
The action is free and ergodic. Its $\F_r$-restriction is a Bernoulli
action of a nonamenable group, hence strongly ergodic, while its
$\Z$-restriction is a Bernoulli transformation, hence ergodic.
The two subgroups are infinite, finitely generated, and commute, so
\cite[Theorem~1.1]{GTD} implies that the orbit relation is
non-approximable. By Proposition~\ref{prop:nonapprox-not-rich}, it is
not richly ergodic.
\end{proof}

\subsection{A richly ergodic equivalence relation}
In this subsection, we construct a free ergodic p.m.p. action of $\Z\times\F_r$
whose orbit relation is an increasing union of ergodic
treeable subrelations, and hence is richly ergodic.

Write $\Z_2=\Z/2\Z$. We begin with two basic actions.
\begin{enumerate}
\item \textbf{The odometer action.}
Let $X=\{0,1\}^{\N}$ with the coin-flip product probability measure
$\mu_1$, and let $T:X\to X$ be the dyadic odometer, given by addition
of $1$ in binary with carry starting at coordinate $0$.
\item \textbf{The Bernoulli shift.}
Let $Y\subseteq2^{\F_r}$ be the free part of the Bernoulli shift of
$\F_r$, equipped with the coin-flip product probability measure $\mu_2$.
\end{enumerate}
Define the free ergodic p.m.p. product action
\[
 \Z\times\F_r\mathrel{\overset{\alpha}{\curvearrowright}}
 (X\times Y,\mu_1\otimes\mu_2)
\]
by
\[
 \alpha(m,\gamma)(x,y)=(T^m x,\gamma\cdot y),
 \qquad m\in\Z,\quad\gamma\in\F_r.
\]
For an action $\rho$, write $E_\rho$ for its orbit relation. We will
express $E_\alpha$ as an increasing union of ergodic treeable relations
$E_{\beta_n}$, where $\beta_n$ is an action of $(\Z_2)^n\times\F_r$.

Choose a countable Borel partition
\[
 Y=\bigsqcup_{n\in\N}Y_n,
 \qquad \mu_2(Y_n)>0\quad(n\in\N),
\]
and define $C:Y\to\N$ by $C(y)=n$ exactly when $y\in Y_n$.
For each $n\in\N$, let $\sigma_n:X\to X$ flip the first $n$ coordinates:
\[
 (\sigma_n x)_i=
 \begin{cases}
  1-x_i,&i<n,\\
  x_i,&i\geq n.
 \end{cases}
\]
In particular, $\sigma_0=\operatorname{id}_X$.
Define an action $\beta_0$ of $\F_r$ by
\begin{align*}
 T_{s_1}^{\beta_0}(x,y)&=(\sigma_{C(y)}x,s_1\cdot y),\\
 T_{s_j}^{\beta_0}(x,y)&=(x,s_j\cdot y),\qquad 2\leq j\leq r.
\end{align*}
These transformations define a free p.m.p. action, since they preserve
$\mu_1\otimes\mu_2$ and project to the free action on $Y$.

\begin{lemma}\label{lem:ergodicity}
The action $\beta_0$ is ergodic.
\end{lemma}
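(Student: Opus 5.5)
We prove ergodicity of $\beta_0$ by a Fourier (character) decomposition in the $X$-coordinate, reducing to ergodicity and weak mixing properties of the Bernoulli shift on $Y$. Let $f\in L^2(X\times Y,\mu_1\otimes\mu_2)$ be $\beta_0$-invariant. Expand $f$ in the Walsh basis of $L^2(X,\mu_1)$:
\[
 f(x,y)=\sum_{F}\,w_F(x)\,f_F(y),\qquad w_F(x)=(-1)^{\sum_{i\in F}x_i},
\]
where $F$ ranges over finite subsets of $\N$ and $f_F\in L^2(Y,\mu_2)$. Since $w_F(\sigma_n x)=(-1)^{|F\cap[0,n)|}w_F(x)$, each $\sigma_n$ acts diagonally on the Walsh basis. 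Invariance under $T^{\beta_0}_{s_j}$ for $j\geq2$ gives $f_F(s_j^{-1}y)=f_F(y)$, or equivalently $f_F\circ s_j=f_F$. Invariance under $T^{\beta_0}_{s_1}$ gives
\[
 f_F(s_1\cdot y)=\chi_F(y)\,f_F(y),\qquad \chi_F(y):=(-1)^{|F\cap[0,C(y))|},
\]
up to the correct placement of $s_1^{\pm1}$. In every case $|f_F|$ is invariant under all generators, hence under $\F_r$. By ergodicity of the Bernoulli shift, $|f_F|$ is therefore a constant $c_F$.

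For $F=\varnothing$ the function $f_\varnothing$ is $\F_r$-invariant, hence constant. It remains to show $f_F=0$ for $F\neq\varnothing$. Suppose $c_F>0$ and normalize so that $f_F$ takes values in $\{\pm1\}$; the values are real because $\chi_F$ is $\pm1$-valued and the phase is fixed up to a global constant. Then $f_F$ is a $\pm1$-valued function with the following properties:
\begin{itemize}
\item it is invariant under $s_2,\dots,s_r$;
\item its ratio with its $s_1$-translate is $\chi_F$, which depends only on $C(y)$, i.e.\ on which cell $Y_n$ contains $y$.
\end{itemize}
Take $n_0=\max F+1$. For $y\in\bigcup_{n\geq n_0}Y_n$ we have $\chi_F(y)=(-1)^{|F|}$, while $\chi_F$ differs on other cells as $n$ varies. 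The key step is to derive a contradiction from the following facts. The subgroup $\langle s_2\rangle$, already with $r\geq2$, acts ergodically, indeed mixingly, on $Y$, so $f_F$ is a.e.\ constant. Then $\chi_F=f_F\circ s_1/f_F$ is the constant $1$. But $\chi_F$ is nonconstant, or at least equals $-1$ on some $Y_n$ of positive measure: choose $n=\min F+1$, so that $F\cap[0,n)=\{\min F\}$ and $\chi_F=-1$ on $Y_n$, which has positive measure. This contradiction gives $f_F=0$.

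Hence $f=f_\varnothing$ is constant, and $\beta_0$ is ergodic. The main point to check carefully is that invariance under $s_2$ alone forces $f_F$ to be constant. This holds because a single infinite-order element of $\F_r$ acts on the Bernoulli shift as a Bernoulli shift over $\Z$, with the cosets of $\langle s_2\rangle$ as coordinates, and hence ergodically. This is exactly where $r\geq2$ is used. The remaining care is in the convergence of the Walsh expansion and the coefficientwise identification, both routine in $L^2$ since each transformation acts unitarily and separately on the two tensor factors, up to the multiplier $\chi_F$.
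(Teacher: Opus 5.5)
Your proof is correct and is essentially the paper's argument in $L^2$ form. Ergodicity of $s_2$ on $Y$ makes every Walsh coefficient $f_F$ constant, which is the analogue of the paper's reduction to $D=A\times Y$. Then $s_1$-invariance together with $\mu_2(Y_n)>0$ amounts to invariance under every $\sigma_n$. The only real difference is the last step: the paper concludes via $\sigma_{i+1}\sigma_i$ and the tail zero--one law, while you conclude directly from the fact that $\sigma_{\min F+1}$ negates $w_F$. The detours through constancy of $|f_F|$, the $\{\pm1\}$ normalization, and $n_0=\max F+1$ are not needed, since $s_2$-invariance alone already makes each $f_F$ constant.
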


\begin{proof}
Let $D\subseteq X\times Y$ be invariant. Since $s_2$ acts ergodically
on $Y$ and fixes $X$, there is a Borel set $A\subseteq X$ such that
$D=A\times Y$ modulo a null set. Invariance under $T_{s_1}^{\beta_0}$
gives
\[
 0=(\mu_1\otimes\mu_2)
   \bigl((T_{s_1}^{\beta_0})^{-1}(D)\triangle D\bigr)
  =\sum_{n\in\N}
   \mu_1\bigl(\sigma_n^{-1}(A)\triangle A\bigr)\mu_2(Y_n).
\]
Thus $A$ is invariant under every $\sigma_n$. Since
$\sigma_{i+1}\sigma_i$ flips coordinate $i$, the tail zero--one law
gives $\mu_1(A)\in\{0,1\}$. Hence $D$ is null or conull.
\end{proof}

For each $n\in\N$, let $(\Z_2)^n$ act by independent flips of the
first $n$ coordinates of $X$, trivially on $Y$. These flips commute
with $\beta_0$, so they define a free p.m.p. action
\[
 (\Z_2)^n\times\F_r\mathrel{\overset{\beta_n}{\curvearrowright}}
 (X\times Y,\mu_1\otimes\mu_2).
\]
The actions are compatible as $n$ increases, and each $\beta_n$ is
ergodic because $E_{\beta_0}\subseteq E_{\beta_n}$.

\begin{lemma}\label{lem:exhaustion}
The orbit relations satisfy
\[
 E_{\beta_0}\subseteq E_{\beta_1}\subseteq\cdots,
 \qquad \bigcup_{n\in\N}E_{\beta_n}=E_\alpha.
\]
\end{lemma}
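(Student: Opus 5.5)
The plan is to prove the two inclusions $\bigcup_n E_{\beta_n}\subseteq E_\alpha$ and $E_\alpha\subseteq\bigcup_n E_{\beta_n}$ separately, noting first that monotonicity is immediate. Indeed, $\beta_n$ is the restriction of $\beta_{n+1}$ to the subgroup $(\Z_2)^n\times\F_r$, where $(\Z_2)^n$ is the subgroup of flips of the first $n$ coordinates. Hence $E_{\beta_n}\subseteq E_{\beta_{n+1}}$.

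For $E_{\beta_n}\subseteq E_\alpha$, it suffices to check that each generator of $\beta_n$ moves points within $E_\alpha$-classes. The generators $T^{\beta_0}_{s_j}$ with $j\geq 2$ are themselves $\alpha$-maps. The coordinate flips only change finitely many coordinates of $x$. By the standard description of dyadic odometer orbits, two points $x,x'\in X$ lie in the same $T$-orbit (off the null set of eventually constant sequences, which is invariant under everything in sight) exactly when they agree beyond some coordinate. So a flip $\tau$ satisfies $\tau x=T^m x$ for some $m$ depending on $x$, and $(\tau x,y)\mathrel{E_\alpha}(x,y)$. The same argument handles $T^{\beta_0}_{s_1}(x,y)=(\sigma_{C(y)}x,s_1 y)$: we have $\sigma_{C(y)}x=T^m x$ for some $m$, so this point equals $\alpha(m,s_1)(x,y)$. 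Since each generator stays in $E_\alpha$-classes, so do all group elements.

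For the reverse inclusion, note that $E_\alpha$ is generated by the graphs of $\alpha(1,e)$ and $\alpha(0,s_j)$ for $1\leq j\leq r$. Since $\bigcup_n E_{\beta_n}$ is an increasing union of equivalence relations, it is itself an equivalence relation. Therefore it suffices to show that each of these generators lies in it. The pair $(x,y)$, $(x,s_jy)$ for $j\geq2$ is a $\beta_0$-pair. For $s_1$, we write
\[
(x,s_1y)=\tau\,T^{\beta_0}_{s_1}(x,y),
\]
where $\tau=\sigma_{C(y)}$ regarded as an element of $(\Z_2)^{C(y)}$; this pair therefore lies in $E_{\beta_{C(y)}}$. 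For the odometer, $Tx$ differs from $x$ only in the first $k+1$ coordinates, where $k$ is the position of the first $0$ in $x$; this is finite for $\mu_1$-a.e.\ $x$. So $(Tx,y)$ is obtained from $(x,y)$ by an element of $(\Z_2)^{k+1}$ flipping the appropriate coordinates, and the pair lies in $E_{\beta_{k+1}}$.

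The only point requiring care, and the one I expect to be the main obstacle, is the pointwise bookkeeping. The description of odometer orbits as tail-equivalence classes holds only off the countable set of eventually constant sequences. Since the inclusions are only claimed modulo null sets, I would discard the $E_\alpha$-saturation of $\{x \text{ eventually constant}\}\times Y$. This set is null, because it is a countable union of images of a null set under measure-preserving maps. It is also invariant under every $\beta_n$, because the flips and the $\sigma_m$ preserve eventual constancy. After this removal, both inclusions hold pointwise.
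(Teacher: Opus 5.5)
Your proof is correct and takes essentially the same route as the paper. Both inclusions rest on two facts: odometer orbits are tail-equivalence classes, and each $\beta_n$-generator changes only finitely many coordinates of $x$. The only difference is cosmetic: you check the reverse inclusion on the generators $\alpha(1,e)$ and $\alpha(0,s_j)$ of $E_\alpha$, using that $\bigcup_n E_{\beta_n}$ is an equivalence relation, whereas the paper handles an arbitrary $E_\alpha$-pair directly by applying $\beta_0(\gamma)$ and then a single flip in some $(\Z_2)^N$. Your explicit removal of the null set of eventually constant sequences makes the "modulo null sets" bookkeeping precise.
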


\begin{proof}
The odometer orbit relation is eventual equality modulo null sets.
Since every generator of $\beta_n$ changes only finitely many coordinates
of $x$ and moves $y$ within its $\F_r$-orbit, we have
$E_{\beta_n}\subseteq E_\alpha$.
Conversely, suppose $(x,y)\mathrel{E_\alpha}(x',y')$ and choose
$\gamma\in\F_r$ with $\gamma\cdot y=y'$. Write
$\beta_0(\gamma)(x,y)=(\widetilde{x},y')$.
The points $\widetilde{x}$ and $x'$ differ in only finitely many
coordinates, so for some sufficiently large $N$, a flip in $(\Z_2)^N$
carries $\widetilde{x}$ to $x'$. Hence
$(x,y)\mathrel{E_{\beta_N}}(x',y')$, proving the reverse inclusion.
\end{proof}

\begin{proposition}\label{prop:rich-product-action}
The product action $\alpha$ of $\Z\times\F_r$ is free and ergodic,
and its orbit relation is richly ergodic.
\end{proposition}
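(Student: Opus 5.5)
The plan is to combine Lemma~\ref{lem:exhaustion} with Theorem~\ref{thm:treeable-rich} and Proposition~\ref{prop:rich-union}. Freeness of $\alpha$ is immediate: if $(T^m x,\gamma y)=(x,y)$, then $\gamma=1$ because $Y$ is the free part of the Bernoulli shift, and $m=0$ because the odometer is free (it has no periodic points). Ergodicity of $\alpha$ follows because $E_{\beta_0}\subseteq E_\alpha$ by Lemma~\ref{lem:exhaustion}, and $\beta_0$ is ergodic by Lemma~\ref{lem:ergodicity}. So the main work is to show that each $E_{\beta_n}$ is an ergodic treeable relation. Theorem~\ref{thm:treeable-rich} then makes each $E_{\beta_n}$ richly ergodic, and Proposition~\ref{prop:rich-union}, applied to the increasing union in Lemma~\ref{lem:exhaustion}, gives rich ergodicity of $E_\alpha$.

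Ergodicity of $E_{\beta_n}$ was already noted, since it contains $E_{\beta_0}$. For treeability, I would observe that $\beta_n$ is a free p.m.p. action of $(\Z_2)^n\times\F_r$. This group contains the finite-index free normal subgroup $\F_r$, acting by $\beta_0$ (the $(\Z_2)^n$-flips commute with $\beta_0$). The orbit relation $E_{\beta_0}$ is treeable, since a free action of a free group is treed by the Cayley graph on the generators $T^{\beta_0}_{s_j}$. Moreover $E_{\beta_0}\subseteq E_{\beta_n}$ has index at most $2^n$: each $E_{\beta_n}$-class is a union of at most $2^n$ classes of $E_{\beta_0}$, namely the images under the finite group $(\Z_2)^n$.

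It then suffices to invoke the standard fact that a finite-index extension of a treeable relation is treeable (Jackson--Kechris--Louveau). If one prefers to avoid this, an alternative route is available. Choose a Borel transversal set $Z$ meeting each $E_{\beta_n}$-class's $E_{\beta_0}$-components appropriately, and identify $E_{\beta_n}$ with a relation built from $E_{\beta_0}$ restricted to a fundamental domain of the free finite group action. Concretely, take $Z=\{(x,y):x_0=\cdots=x_{n-1}=0\}$. Every $E_{\beta_n}$-class meets $Z$, so $E_{\beta_n}\upharpoonright Z$ is a complete-section restriction. Since treeability passes to and from complete sections (via Theorem~\ref{thm:tree_restriction} and gluing finite trees over the section), it suffices to tree $E_{\beta_n}\upharpoonright Z$. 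That restriction is the orbit relation of the "quotient" $\F_r$-action on $X/(\Z_2)^n\times Y\cong Z$, which is again a free action of $\F_r$, hence treeable.

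The main obstacle is the treeability step, specifically verifying that the induced $\F_r$-action on $Z$ is free. I expect the first approach above to work: use the finite-index/complete-section argument, with freeness checked directly. If $\gamma$ fixes the class of $(x,y)$ modulo flips, then projecting to $Y$ forces $\gamma y=y$, hence $\gamma=1$. This shows the quotient action is free, so its relation is treed by the Cayley graph. Gluing back along the finite flip orbits gives a treeing of $E_{\beta_n}$, since the extension of a treeing by disjoint finite trees on the fibers is still acyclic.
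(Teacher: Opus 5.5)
Your proof is correct and has the same architecture as the paper's. Each $E_{\beta_n}$ is ergodic (it contains $E_{\beta_0}$) and treeable, hence richly ergodic by Theorem~\ref{thm:treeable-rich}. Lemma~\ref{lem:exhaustion} and Proposition~\ref{prop:rich-union} then finish. Your verification that $\alpha$ is free and ergodic is also fine.

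The only difference is the treeability step. The paper cites strong treeability of $(\Z_2)^n\times\F_r$, obtained from that of $\F_r$ by closure under extensions with finite kernel \cite[Theorem~B.3(4)]{CGMTD}. Your second route proves the needed instance by hand:
\begin{itemize}
\item $Z=\{(x,y):x_0=\cdots=x_{n-1}=0\}$ meets each $(\Z_2)^n$-orbit exactly once.
\item The induced $\F_r$-action on $Z$ is well defined because the flips commute with $\beta_0$. It preserves the restricted measure, since its graph lies in the p.m.p.\ relation $E_{\beta_n}$.
\item It is free because its $Y$-coordinate is the free shift. Hence its Cayley graph trees $E_{\beta_n}\upharpoonright Z$.
\item Joining every point to its flip-representative in $Z$ only adds pendant edges, so the result is a treeing of $E_{\beta_n}$.
\end{itemize}
This is exactly the mechanism behind the finite-kernel closure property. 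Your argument is therefore self-contained, while the citation is shorter and covers every free action of these groups at once.

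Do not use your first suggestion, though. ``A finite-index extension of a treeable relation is treeable'' is not a standard fact you can attribute to Jackson--Kechris--Louveau. As far as I know it is an open problem. The naive approach of adding edges between the finitely many subclasses inside a class would require a Borel choice of connecting edges that avoids cycles, which is not available in general. What makes your case work is that $\F_r$ is also the quotient by the \emph{finite normal} subgroup $(\Z_2)^n$, which acts freely and has a Borel transversal. Your final paragraph, which blurs the two routes, should rest on that fact and not on the finite-index viewpoint.
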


\begin{proof}
Each $\beta_n$ is free and ergodic. The group $(\Z_2)^n\times\F_r$
is strongly treeable: strong treeability is preserved by extensions
with finite kernel, and $\F_r$ is strongly treeable
\cite[Theorem~B.3(4)]{CGMTD}. Thus every $E_{\beta_n}$ is treeable
and, by Theorem~\ref{thm:treeable-rich}, richly ergodic.
Lemma~\ref{lem:exhaustion} and Proposition~\ref{prop:rich-union}
now imply that $E_\alpha$ is richly ergodic.
\end{proof}

Together with Proposition~\ref{prop:non-rich-action}, this proves
Theorem~\ref{thm:rich_and_non_rich}.

The existence of a strongly increasing exhaustion is sufficient for
rich ergodicity by Proposition~\ref{prop:strongly-increasing}, whereas
non-approximability obstructs rich ergodicity by
Proposition~\ref{prop:nonapprox-not-rich}. We conclude with two questions
about the converses in the non-hyperfinite case.

\begin{question}\label{ques:rich-ergodicity-converses}
Does there exist a non-hyperfinite ergodic p.m.p. countable Borel
equivalence relation that is richly ergodic but does not admit a
strongly increasing exhaustion?
\end{question}

\begin{question}\label{ques:non-rich-approximable}
Does there exist a non-hyperfinite ergodic p.m.p. countable Borel
equivalence relation that is not richly ergodic but admits a
nontrivial approximation?
\end{question}

\end{document}